\newcommand{\N}{{\mathbb N}}
\newcommand{\R}{{\mathbb R}}
\newcommand{\Sp}{{\mathbb S}}
\newcommand{\pt}{\partial}
\newcommand{\wh}{\widehat}
\newcommand{\ov}{\overline}
\newcommand{\Teich}{\mathcal{T}}
\newcommand{\QF}{\mathcal{QF}}
\newcommand{\F}{\mathcal{F}}
\newcommand{\Map}{\mathbf{F}}
\newcommand{\MF}{\operatorname{MF}}
\newcommand{\FMF}{\MF_{\dagger}}
\newcommand{\ML}{\operatorname{ML}}
\newcommand{\FML}{\ML^2_{\dagger}}
\newcommand{\MeF}{\mathcal{MF}}
\newcommand{\QD}{\operatorname{QD}}
\newcommand{\q}{\mathbf{q}}
\newcommand{\hor}{\mathbf{H}}
\newcommand{\ver}{\mathbf{V}}
\newcommand{\Ne}{\operatorname{U}}
\newcommand{\Bers}{\mathbf{B}}
\newcommand{\dist}{\mathbf{d}}
\newcommand{\Belt}{\operatorname{Belt}}
\newcommand{\zero}{\mathbf{0}}
\newcommand{\LL}{\operatorname{L}}
\newtheorem{theorem}{\rm\bf Theorem}[section]
\newtheorem{proposition}[theorem]{\rm\bf Proposition}
\newtheorem{lemma}[theorem]{\rm\bf Lemma}
\newtheorem{definition}[theorem]{\rm\bf Definition}
\newtheorem{remark}[theorem]{\rm\bf Remark}
\newtheorem{question}[theorem]{\rm\bf Question}
\begin{document}
\title[Measured foliations at infinity]{Measured foliations at infinity of quasi-Fuchsian manifolds}

\author{ Diptaishik Choudhury \,\,\text{and}\,\, Vladimir Markovi\'c}
\address{\newline YMSC  \newline Tsinghua University   \newline Beijing, China \newline {\&} \newline BIMSA \newline Beijing, China }

\today

\subjclass[2020]{Primary 20H10}

\begin{abstract}  Let $(\lambda^+(M),\lambda^-(M))$ denote the pair of measured foliations at the boundary at infinity $\partial_\infty$ of a quasi-Fuchsian manifold $M$. We prove that  $(\lambda^+(M),\lambda^-(M))$ is filling if $M$ is close to being Fuchsian. We also show that given any filling pair $(\alpha_1,\alpha_2)$ of measured foliations, and every small enough $t>0$, the pair $(t\alpha_1,t\alpha_2)$ is realised as the pair of measured foliations at infinity of some quasi-Fuchsian manifold $M$. This answers questions of Schlenker \cite{schlenker} near the Fuchsian locus.
\end{abstract}

\maketitle

\section{Introduction}
\subsection{A word on notation}  Once and for all we fix an orientable closed smooth surface $\Sigma_g$ of genus $g\ge 2$. We let $\Sigma$ and $\ov{\Sigma}$ denote the surface $\Sigma_g$ equipped with the opposite orientations respectively. Throughout the paper we adopt the following (standard) notation:\\

$\F=\text{marked Fuchsian manifolds homeomorphic to  $\Sigma_g \times \R$}$, \\

$\QF=\text{marked quasi-Fuchsian manifolds homeomorphic to  $\Sigma_g \times \R$}$,\\
\vskip .1cm
\noindent
Let $X$ denote a Riemann surface marked by $\Sigma_g$.  We let  $\Sigma_X=\Sigma$ if $X$ has  the same orientation as $\Sigma$, and  $\Sigma_X=\ov{\Sigma}$ if  $X$ has the same orientation as $\ov{\Sigma}$. 
Then:\\

$\QD(X)=\text{the vector space of holomorphic quadratic differentials  on  $X$}$,\\ 

$\Belt (X)=\text{the vector space of Beltrami differentials on $X$}$,\\

$\Teich(\Sigma_X)=\text{the Teichm\"uller space of $\Sigma_X$}$,\\ 

$\QD(\Sigma_X)=\text{the vector bundle $\{\QD(Y)\}_{Y \in \Teich(\Sigma_{X})}$}$,\\

$\QD_0(\Sigma_X)=\{\phi\in \QD(\Sigma_X) \,:\, \phi\not\equiv 0\}$.\\

\vskip .1cm
\noindent
We let $\Teich(\Sigma_g)=\Teich(\Sigma)\sqcup\Teich(\ov{\Sigma})$, and $\QD(\Sigma_g)=\QD(\Sigma)\sqcup\QD(\ov{\Sigma})$.  The Teichm\"uller metric  is denoted by $\dist_\Teich(\cdot,\cdot)$. The vector bundles   
$\QD(\Sigma)$ and $\QD(\ov{\Sigma})$ are isomorphic to the cotangent bundles over 
$\Teich(\Sigma)$, and $\Teich(\ov{\Sigma})$, respectively. 
\vskip .1cm
By $\MeF$ we denote the space of measured foliations on $\Sigma_g$. Two measured foliations fill the surface $\Sigma_g$ if any third  (non-zero) measured foliation has  a non-zero intersection number with at least one of the two foliations. If $A\in \MeF$ we let $[A]$ denote the measure equivalence class of $A$.  We let \\

$\MF=\{[A]:A\in \MeF\},\quad\quad\quad \MF^2=\MF\times \MF$, \\

$\FMF^2=\{([A_1],[A_2]): A_1,A_2\in \MeF, \,\,\text{and}\,\, (A_1, A_2) \,\, \text{fill}\,\, \Sigma_g \}$, \\

$\hor(\phi)$=\text{the equivalence class of the horizontal measured foliation of $\phi\in \QD_0(X)$},\\

$\ver(\phi)$=\text{the equivalence class of the vertical measured foliation of $\phi\in \QD_0(X)$}.\\

\subsection{The mirror surface} The space $\Teich(\Sigma_g)$ is equipped with the natural involution 
$$
\Teich(\Sigma_g)\xrightarrow{X \to \ov{X}} \Teich(\Sigma_g)
$$
which sends $X$ to its mirror image  Riemann surface $\ov{X}$. The mirror map exchanges the  components  
$\Teich(\Sigma)$ and $\Teich(\ov{\Sigma})$. Furthermore, it induces  the linear isomorphism  
$$
\QD(\ov{X}) \xrightarrow{\phi \to \wh{\phi}} \QD(X)
$$
for $X\in \Teich(\Sigma_g)$ as follows. We let $\iota:X\to \ov{X}$ denote the corresponding anti biholomorphic (mirror) map.  Given $\phi \in \QD(\ov{X})$, we let
$$
\wh{\phi}=\overline{(\phi\circ\iota)(\iota')^2},
$$
where $\iota'=\ov{\pt}\iota$. We record the following (obvious)  proposition.
\vskip .1cm
\begin{proposition}\label{prop-vic} Let $\phi\in \QD_0(\Sigma_g)$. Then $\hor(\phi)=\hor(\wh{\phi})$, and 
$\ver(\phi)=\ver(\wh{\phi})$, in $\MF$.
\end{proposition}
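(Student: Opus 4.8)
The plan is to prove the two equalities first at the level of the Riemann surfaces $X$ and $\ov{X}$, and then to transport them to $\Sigma_g$. Say $\phi\in\QD_0(\ov{X})$, so that $\wh\phi\in\QD_0(X)$ and $\iota\colon X\to\ov{X}$ is the corresponding anti-biholomorphism. Recall that for $\psi\in\QD_0(Y)$ the horizontal foliation $\hor(\psi)$ is the measured foliation whose non-singular leaves are the arcs along which $\psi\,dz^{2}>0$, with transverse measure $|\Im\sqrt{\psi}|$; the vertical foliation $\ver(\psi)$ is described identically with $\psi\,dz^{2}<0$ and transverse measure $|\Re\sqrt{\psi}|$. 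So what I want to show is that $\iota$ pulls the measured foliation $\hor(\phi)$ on $\ov{X}$ back to $\hor(\wh\phi)$ on $X$, and likewise $\iota^{*}\ver(\phi)=\ver(\wh\phi)$; the statement in $\MF$ then follows because $\iota$ realises the change of markings between $X$ and $\ov{X}$.

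For the first point I would simply unwind the definitions in local holomorphic coordinates $z$ on $X$ and $u$ on $\ov{X}$. Since $\iota$ is anti-biholomorphic, $\pt\iota=0$, so $u=\iota(z)$ is a function of $\ov{z}$ alone and $\iota'=\ov{\pt}\iota$ is its (anti-holomorphic) derivative; writing $\phi=\phi(u)\,du^{2}$ we get that $(\phi\circ\iota)(\iota')^{2}$ is an expression of the form $\psi(\ov z)\,d\ov z^{2}$, and $\wh\phi=\overline{(\phi\circ\iota)(\iota')^{2}}$ is precisely its complex conjugate, now a genuine holomorphic quadratic differential $\overline{\psi(\ov z)}\,dz^{2}$ on $X$. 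The leaves of $\iota^{*}\hor(\phi)$ are the arcs on which $\phi(\iota(z))(\ov{\pt}\iota)^{2}\,d\ov z^{2}>0$, while the leaves of $\hor(\wh\phi)$ are those on which $\overline{\phi(\iota(z))(\ov{\pt}\iota)^{2}}\,dz^{2}>0$; these two conditions are equivalent because a complex number $a$ satisfies $a>0$ if and only if $\ov a>0$, together with $\overline{d\ov z^{2}}=dz^{2}$. The transverse measures likewise agree, since $|\Im\ov a|=|\Im a|$ (and $|\Re\ov a|=|\Re a|$ in the vertical case). Hence $\iota^{*}\hor(\phi)=\hor(\wh\phi)$ and $\iota^{*}\ver(\phi)=\ver(\wh\phi)$ as measured foliations on $X$.

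Finally I would pass to $\MF$. If $f\colon\Sigma_g\to X$ is the marking of $X$, then by the very definition of the mirror map the marking of $\ov{X}$ is (isotopic to) $\iota\circ f$; consequently any measured foliation $\mu$ on $\ov{X}$ determines in $\MF$ the class of $(\iota\circ f)^{*}\mu=f^{*}(\iota^{*}\mu)$, which is the same class that $\iota^{*}\mu$ determines on $X$. Applying this with $\mu=\hor(\phi)$ and $\mu=\ver(\phi)$ and using the previous paragraph gives $\hor(\phi)=\hor(\wh\phi)$ and $\ver(\phi)=\ver(\wh\phi)$ in $\MF$. There is no serious obstacle here — the whole argument is conjugation bookkeeping — but the one place to be careful is precisely this last step: one must check that $\iota$ genuinely implements the change of marking relating the $\Teich(\Sigma)$ and $\Teich(\ov\Sigma)$ components, and that orientation plays no role in the definition of $\MF$ (a measured foliation being insensitive to the orientation of the underlying surface). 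This is what makes the proposition "obvious".
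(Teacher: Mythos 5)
Your argument is correct and follows exactly the same route as the paper's one-line proof: the content is that $\iota$ realizes the change of marking (hence induces the identity on $\Sigma_g$), so the equality of classes in $\MF$ reduces to checking that $\iota$ carries $\hor(\phi)$ to $\hor(\wh\phi)$ and $\ver(\phi)$ to $\ver(\wh\phi)$. The paper leaves the local-coordinate conjugation computation implicit, and you have simply spelled it out.
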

\vskip .1cm
\begin{proof} The map $\iota$ between  the marked Riemann surfaces $X$ and $\ov{X}$ induces the identity map on $\Sigma_g$. 
\end{proof}
\vskip .1cm
The map $\iota:X\to \ov{X}$  induces another isomorphism 
$$
\Belt(\ov{X}) \xrightarrow{\mu \to \wh{\mu}} \Belt(X)
$$
by letting
$$
\wh{\mu}=\overline{(\mu\circ\iota)}\frac{\iota'}{\ov{\iota'}}.
$$

\vskip .1cm

\subsection{The Bers unifomization and embedding} The Bers unifomization is the homeomorphism
$$
\Bers:\QF\to \Teich(\Sigma)\times \Teich(\ov{\Sigma})
$$
which sends $(X,\ov{Y})\in \Teich(\Sigma)\times \Teich(\ov{\Sigma})$ to  the marked quasi-Fuchsian manifold $M\in \QF$ such that 
$X\approx \partial_\infty^+M$, and $\ov{Y}\approx\partial_\infty^-M$. Here $\partial_\infty^+M$, and $\partial_\infty^-M$,  denote the two  components of the boundary at infinity of $M$ endowed with the induced complex structures. 
\vskip .1cm

On the other hand, given $X\in \Teich(\Sigma_g)$ we let
$$
\beta_X:\Teich(\ov{\Sigma_X})\to \QD(X)
$$
denote the Bers embedding.

\vskip .1cm

\begin{definition}\label{def-q} We define the maps
$$
\q^+:\QF\setminus \F\to \QD_0(\Sigma) \quad \quad\quad \q^-:\QF\setminus \F\to \QD_0(\ov{\Sigma})
$$
by letting $\q^+(M)=\beta_X(\ov{Y})$, and $\q^-(M)=\beta_{\ov{Y}}(X)$, where $(X,\ov{Y})=\Bers^{-1}(M)$.
\end{definition}
\vskip .1cm

\subsection{The measured foliation at infinity} We have:

\vskip .1cm

\begin{definition}\label{def-main} The measured foliation at infinity of a quasi-Fuchsian manifold $M\in \QF\setminus \F$ is the pair 
$\lambda(M)=(\lambda^+(M),\lambda^-(M))$, where $\lambda^\pm(M)=\hor(\q^\pm(M))$. This defines the map 
$$
\lambda:\QF\setminus \F\to \MF^2.
$$
\end{definition}

\vskip .1cm
The Bers unifomization implies that any pair of marked Riemann surfaces in $\Teich(\Sigma)\times \Teich(\ov{\Sigma})$ can be (uniquely) realised as the boundary at infinity of some  quasi-Fuchsian manifold $M$. It is natural to inquire to which extent this holds if the pair of marked Riemann surfaces  is replaced by the topological data  $\lambda(M)\in  \MF^2$. These types of questions particularly came into focus after Krasnov-Schlenker \cite{k-s} discovered that the variational formula for the renormalised volume at a point $M\in \QF\setminus \F$ only depends on $\lambda(M)$ (also see \cite{schlenker-1}). 

\vskip .1cm
\begin{remark}
The map $\lambda$ is analogous to the map $\ell:\QF\setminus \F\to \ML^2$ where $\ell(M)=(\ell^+(M),\ell^-(M))$, and $\ell^\pm(M)$, are the bending measured laminations  of the  boundary components of the convex core of $M$. Here $\ML$ denotes the space of geodesic measured laminations.  Bonahon-Otal \cite{b-o} completely described the image $\ell(\QF\setminus \F)$. Dular-Schlenker \cite{d-s} showed recently that  $\ell$ is injective.
\end{remark}
\vskip .1cm

In  \cite{schlenker} Schlenker raised the following questions:
\vskip .1cm
\begin{question}\label{question-1} Describe the image $\lambda(\QF\setminus \F)$. 
\end{question}
\vskip .1cm

\begin{question}\label{question-2}  Is  $\lambda(\QF\setminus \F)\subset \FMF^2$? 
\end{question}
\vskip .1cm
\begin{remark} The  inclusion $\ell(\QF\setminus \F)\subset \FML$ is an observation of Thurston.
\end{remark}
\vskip .1cm
\begin{question}\label{question-3}  Does $\lambda(M)$ uniquely determine $M$?
\end{question}
\vskip .1cm

\vskip .1cm

Very little is known regarding these questions. Bonahon  \cite{bonahon} used differentiability of a topological blow-up of  the map $\ell$ at the Fuchsian locus $\F$ to answer the analogous questions (in the context of bending measures) near the Fuchsian locus .  However, it is not known that a  blow-up  of $\lambda$ has such differentiable properties at every point of $\F$ (compare with \cite{choudhury}). In fact, this seems unlikely.

\vskip .1cm

\subsection{The main results}

The main goal of this paper is to provide   answers to  Question \ref{question-1}  and Question \ref{question-2} in a neighbourhood of the Fuchsian locus. This is the content of the following  theorem.

\begin{theorem}\label{thm-main-1} There exists a neighbourhood  $\Ne\subset \QF\setminus \F$ of the Fuchsian locus $\F$ such that 
\begin{enumerate}
\item $\lambda(\Ne)\subset \FMF^2$,
\vskip .1cm
\item for any $(\alpha_1,\alpha_2)\in \FMF^2$ there exists $t_0>0$, depending on  $(\alpha_1,\alpha_2)$, such that  
$(t\alpha_1,t\alpha_2)\in \lambda(\Ne)$ for every $0<t<t_0$. 
\end{enumerate}
\end{theorem}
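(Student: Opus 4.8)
\emph{Strategy and a coordinate system near $\F$.} The plan is to blow up $\lambda$ along the Fuchsian locus and identify its leading term with the Gardiner--Masur parametrisation of $\FMF^2$ by non-zero holomorphic quadratic differentials. Since $\q^+(M)=\beta_X(\ov Y)\in\QD(X)$ with $X=\partial_\infty^+M$, and each Bers slice $\beta_X(\Teich(\ov{\Sigma_X}))$ is open in $\QD(X)$ and contains a ball of universal radius about $0$ (Nehari; Ahlfors--Weill), the map $M\mapsto(\partial_\infty^+M,\q^+(M))$ is a homeomorphism from a neighbourhood of $\F$ in $\QF$ onto a neighbourhood of the zero section of the total space of $\QD(\Sigma)$, carrying $\F$ onto the zero section. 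Hence $\q^-$ becomes a function of $\q^+$: there is a real-analytic map $\Psi$, from a neighbourhood of the zero section of $\QD(\Sigma)$ into the total space of $\QD(\ov\Sigma)$, taking zero section to zero section, with $\q^-(M)=\Psi(\partial_\infty^+M,\q^+(M))$; and $\lambda(M)=\big(\hor(\q^+(M)),\,\hor(\q^-(M))\big)$.

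\emph{The linearisation --- the conceptual crux.} Let $Z\in\F$, with $\Bers^{-1}$-coordinate $(Z,\ov Z)$, and move off the diagonal in a direction $\mu\in T_Z\Teich(\Sigma)$, with $\ov Z$ moved by the mirror vector $\hat\mu\in T_{\ov Z}\Teich(\ov\Sigma)$. Since $\beta_X$ sends its base point $\ov X$ to $0$ for every $X$, differentiating this identity along $\F$ gives, to first order, $\q^+=-A(\hat\mu)+O(\|\mu\|^2)$, where $A=D\beta_Z|_{\ov Z}\colon T_{\ov Z}\Teich(\ov\Sigma)\to\QD(Z)$ is the derivative of the Bers embedding at the origin (an isomorphism, by Ahlfors--Weill); the same computation, together with the mirror-equivariance $\wh{\beta_{\ov Z}(X)}=\beta_Z(\ov X)$ of the Bers embedding, gives $\q^-=(A(\hat\mu))^\vee+O(\|\mu\|^2)$, where $\psi\mapsto\psi^\vee$ is the inverse of the mirror isomorphism $\phi\mapsto\wh\phi$. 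Eliminating $\hat\mu$ yields
$$
\q^-(M)=-\big(\q^+(M)\big)^\vee+O\big(\|\q^+(M)\|^2\big),
$$
so by Proposition~\ref{prop-vic} and the identities $\hor(c\phi)=\sqrt c\,\hor(\phi)$ ($c>0$), $\hor(-\phi)=\ver(\phi)$, we obtain $\hor(\q^-(M))=\ver(\q^+(M))+o(\|\q^+(M)\|^{1/2})$: to leading order, the measured foliation at infinity of $M$ is the pair consisting of the horizontal and the vertical foliation of the single quadratic differential $\q^+(M)$. I expect the uniform control of these error terms --- in particular of the higher-order terms of the Bers embedding --- together with the subsequent degree/continuity arguments in the merely-continuous (non-smooth) category, to be the main technical burden.

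\emph{Proof of (1).} Fix $X\in\F$ and put $\phi=\q^+(M)\in\QD(X)$. By the previous step, $\|\phi\|^{-1/2}\lambda(M)\to(\hor\psi,\ver\psi)$ as $\phi\to0$ along a unit direction $\psi$, uniformly over the compact unit sphere of the finite-dimensional space $\QD(X)$. The horizontal and vertical foliations of a non-zero quadratic differential always fill, and filling is an open condition, quantifiable uniformly over compact sets of pairs (by compactness of $\mathbb{P}\MF$); hence there is $\delta(X)>0$ such that $\lambda(M)\in\FMF^2$ whenever $\partial_\infty^+M=X$ and $\|\q^+(M)\|_X<\delta(X)$. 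An exhaustion of $\F\cong\Teich(\Sigma)$ by compacta together with mapping-class-group equivariance lets one take $\delta$ continuous and positive, and $\Ne:=\{M:\|\q^+(M)\|_{\partial_\infty^+M}<\delta(\partial_\infty^+M)\}$ is the required neighbourhood.

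\emph{Proof of (2).} Let $(\alpha_1,\alpha_2)\in\FMF^2$, and write $Q_X(\gamma)\in\QD(X)$ for the Hubbard--Masur differential on $X$ with horizontal foliation $\gamma$, so $Q_X(t\alpha_1)=t^2Q_X(\alpha_1)$. Imposing $\lambda^+(M)=[t\alpha_1]$ forces $\q^+(M)=t^2Q_X(\alpha_1)$, with $X=\partial_\infty^+M$ still free; it then remains to solve
$$
G_t(X):=\hor\!\big(t^{-2}\Psi\big(X,t^2Q_X(\alpha_1)\big)\big)=[\alpha_2],\qquad X\in\Teich(\Sigma).
$$
By the linearisation, $G_t\to G_0$ locally uniformly as $t\to0^+$, where $G_0(X)=\ver(Q_X(\alpha_1))$ up to a fixed positive scalar (which may be absorbed into $\alpha_2$, since rescaling preserves filling). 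By Gardiner--Masur, $X\mapsto\ver(Q_X(\alpha_1))$ is a homeomorphism of $\Teich(\Sigma)$ onto the open subset of $\MF$ of foliations that fill together with $\alpha_1$; as $(\alpha_1,\alpha_2)$ fill, there is a unique $X_0$ with $G_0(X_0)=[\alpha_2]$, and $G_0$ has local degree $\pm1$ there. Working in the coordinates $\Teich(\Sigma)\cong\R^{6g-6}\cong\MF$, a standard Brouwer-degree argument then produces, for all sufficiently small $t>0$, a point $X_t$ with $G_t(X_t)=[\alpha_2]$. The corresponding $M_t$ satisfies $\q^+(M_t)=O(t^2)$, hence tends to $\F$ and lies in $\Ne$ for small $t$; and by construction $\lambda^+(M_t)=[t\alpha_1]$, while $\lambda^-(M_t)=\hor(\Psi(X_t,t^2Q_{X_t}(\alpha_1)))=t\,G_t(X_t)=[t\alpha_2]$. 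This proves (2), with $t_0=t_0(\alpha_1,\alpha_2)$.
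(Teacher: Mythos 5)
Your proof is correct and follows the same overall architecture as the paper's: a linearisation of the pair $(\q^+,\q^-)$ along $\F$ (your $\q^-=-(\q^+)^\vee+O(\|\q^+\|^2)$ is precisely the content of the paper's Theorem~\ref{thm-main-2}, combined with Lemma~\ref{lemma-class}), then openness of filling via Gardiner--Masur for part~(1), then a degree argument for part~(2). However, you arrive at the linearisation and the degree argument by genuinely different routes, and it is worth recording the comparison. For the linearisation, you differentiate the identity $\beta_X(\ov X)=0$ along $\F$ and invoke the mirror-equivariance $\wh{\beta_{\ov Z}(X)}=\beta_Z(\ov X)$ of the Bers embedding; the paper instead compares the two Teichm\"uller maps $f_n:X_n\to Y_n$, $g_n:Y_n\to X_n$ and uses that their initial quadratic differentials $a_n,b_n$ satisfy $a_n\to a$, $b_n\to -a$ (Lemma~\ref{lemma-lim}), together with Bers' derivative formula (equation~(\ref{eq-class})). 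Both yield $\q^-\approx-\wh{\q^+}$, but your derivation packages the harmonic projection $\Psi$ implicitly inside $A=D\beta_Z|_{\ov Z}$ rather than carrying $\Psi(\mu_\varphi)$ explicitly. For part~(2), you fix $\alpha_1$, impose $\q^+(M)=t^2Q_X(\alpha_1)$ using the Hubbard--Masur differential, and run a $(6g-6)$-dimensional Brouwer-degree argument in the variable $X\in\Teich(\Sigma)$ to hit $\alpha_2$. The paper instead constructs the map $\Map:\LL\to\MF^2$ parametrised by $\varphi\in\QD_0(\Sigma)$ (a $(12g-12)$-dimensional degree argument), shows $\Map(\cdot,0)=\gamma\circ h$ where $h$ is the $L^1$-harmonic-projection twist $h(\varphi)=\|\varphi\|_1\Psi(\mu_{\varphi^1})$, and then applies the explicit extension-to-the-sphere degree lemma (Lemma~\ref{lemma-aux-0}). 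Your slice-wise version is lower-dimensional and bypasses the twist map $h$ entirely, at the price of invoking Hubbard--Masur existence/uniqueness and continuity of $X\mapsto Q_X(\alpha_1)$ directly; the paper's version keeps all of $\MF^2$ as the target and lets the Gardiner--Masur homeomorphism $\gamma$ do the parametrising. Two small things you should tighten if you write this up: (a) the invocation of mapping-class-group equivariance in part~(1) is a red herring, since $\mathcal{M}_g$ is not compact --- what you actually need, and what your argument really gives, is that $\delta(\cdot)$ is locally bounded away from zero (hence can be chosen lower semicontinuous and positive), which suffices for $\Ne$ to be a neighbourhood of $\F$; (b) in the degree argument you should record explicitly that the found $X_t$ lies in a fixed compact ball $B\ni X_0$ (which is automatic from the construction, since the degree is computed on $B$), so that $\q^+(M_t)=t^2Q_{X_t}(\alpha_1)\to 0$ uniformly and $M_t$ does enter $\Ne$.
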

\vskip .1cm
\noindent	
The proof of the first part of Theorem \ref{thm-main-1} rests on establishing the following property of the maps $\q^+$ and 
$\q^-$.
\vskip .1cm
\begin{theorem}\label{thm-main-2} Suppose $M_n \in \QF\setminus \F$ is a sequence of  quasifuchsian manifolds converging to a  Fuchsian manifold $M\in \F$. Let $\,\,t_n=\dist_{\Teich}(\pt^+_\infty M_n,\ov{\pt^-_\infty M_n})$. There exist a quadratic differential $\phi \in \QD(\pt^+_\infty M)\setminus\zero$,
such that 
$$
\lim\limits_{n\to \infty} \frac{\q^+(M_n)}{t_{n}}= \phi,\quad\quad\quad \lim\limits_{n\to \infty} \frac{\q^-(M_n)}{t_{n}}=-\wh{\phi}.
$$
\end{theorem}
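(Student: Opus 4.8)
The plan is to analyze the behavior of the Bers embedding near the diagonal of $\Teich(\Sigma)\times\Teich(\ov\Sigma)$, where both embeddings degenerate, and to extract the leading-order term of $\q^\pm$ as the two conformal boundaries collide. First I would set up coordinates: write $(X_n,\ov{Y_n})=\Bers^{-1}(M_n)$, so that $X_n\to X$ and $\ov{Y_n}\to\ov X$ in $\Teich(\Sigma_g)$ (using $M_n\to M\in\F$), with $t_n=\dist_\Teich(X_n,\ov{Y_n})\to 0$. The starting point is the classical fact that the Bers embedding $\beta_X:\Teich(\ov{\Sigma_X})\to\QD(X)$ vanishes exactly at the basepoint $\ov X$ and that its derivative there is (a multiple of) the standard identification of the tangent space $T_{\ov X}\Teich(\ov{\Sigma_X})$ with $\QD(X)$ via the Ahlfors--Weill / Bers reproducing formula. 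Concretely, if $\mu$ is a Beltrami differential on $\ov X$ representing a Teichm\"uller tangent vector, then $\beta_X(\ov X + \mu + o(\mu))$ has leading term a fixed linear image $L_X(\mu)\in\QD(X)$, where $L_X$ is the (surjective) map dual to the pairing $\langle\phi,\mu\rangle=\int_X\phi\mu$. This is where $t_n$ enters: up to passing to a subsequence, write $\ov{Y_n}=\ov{X_n}\!\cdot\!\exp(t_n\nu_n+o(t_n))$ for unit-norm Teichm\"uller vectors $\nu_n\to\nu$ (a tangent vector at $\ov X$), so that $\q^+(M_n)=\beta_{X_n}(\ov{Y_n}) = t_n\, L_{X_n}(\nu_n) + o(t_n)$, and hence $\q^+(M_n)/t_n\to L_X(\nu)=:\phi$. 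One must check $\phi\neq\zero$: this follows because $t_n$ was chosen to be exactly the Teichm\"uller distance, so $\nu$ is a genuine nonzero Teichm\"uller tangent vector and $L_X$ is injective on the relevant space of Teichm\"uller (i.e. Hopf-differential-type) Beltrami classes — or, more robustly, because the Teichm\"uller norm of $\q^+(M_n)$ is comparable to $t_n$, which can be seen from the biLipschitz comparison between the Bers embedding and the Kobayashi/Teichm\"uller metric near the basepoint.

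The second and more delicate half is the identification $\lim \q^-(M_n)/t_n = -\wh\phi$. Here I would exploit the symmetry of the quasi-Fuchsian construction under the orientation-reversing mirror involution. The manifold $M_n$ with boundary data $(X_n,\ov{Y_n})$ and the manifold $\ov{M_n}$ with boundary data $(Y_n,\ov{X_n})$ are related by the isometry reversing the orientation of $M_n$; concretely $\q^-(M_n)=\beta_{\ov{Y_n}}(X_n)$ should be compared to $\q^+$ of the mirror configuration via the isomorphism $\phi\mapsto\wh\phi$ of Section 1.2. Thus $\q^-(M_n)/t_n$ converges, and the limit is (the $\widehat{\ \cdot\ }$-image of) $L_{\ov X}(\wt\nu)$ where $\wt\nu$ is the tangent vector at $X$ obtained by "reversing" the geodesic segment from $\ov{X_n}$ to $X_n$. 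The point is that the Teichm\"uller geodesic segment joining $X_n$ and $\ov{Y_n}$, traversed from the $\ov{Y_n}$ end, has initial Beltrami coefficient which is (up to the standard sign) the image of $\nu$ under the differential of the mirror map; tracking this sign through the Ahlfors--Weill formula — and through the fact that the horizontal/vertical foliations, not the differentials themselves, are what ultimately matter — produces the factor $-1$ and the hat. A clean way to organize this: show that $\q^+$ and $\q^-$ are the two "half-derivatives" of a single object, namely the Beltrami coefficient $\mu_n$ of the Teichm\"uller map $X_n\to Y_n$, so that $\q^+(M_n)/t_n$ and $-\wh{\q^-(M_n)/t_n}$ both converge to the same quadratic differential $\phi=$ the Hopf differential of the limiting Teichm\"uller direction.

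For the ordering of steps: (1) reduce to a subsequence and fix the limiting Teichm\"uller direction $\nu$ at $\ov X$, with $t_n$ the exact distance; (2) recall/state the first-order expansion of the Bers embedding at its basepoint in terms of the canonical pairing $\langle\phi,\mu\rangle$, including the surjectivity of $L_X$ and the biLipschitz lower bound giving $\|\q^+(M_n)\|_\Teich\asymp t_n$; (3) deduce $\q^+(M_n)/t_n\to\phi\neq\zero$; (4) set up the mirror symmetry $\ov{M_n}$ and the induced maps $\wh{\ \cdot\ }$ on $\QD$ and $\Belt$; (5) relabel $\q^-(M_n)=\beta_{\ov{Y_n}}(X_n)$ as a Bers embedding evaluated near its basepoint $X_n$ in the direction $-$(reversed geodesic), apply step (2) again, and transport through $\wh{\ \cdot\ }$; (6) match the two limits, confirming the sign. \textbf{The main obstacle} I expect is step (5)–(6): pinning down the exact constant and sign relating the initial Beltrami coefficient of the Teichm\"uller segment at its two endpoints under the mirror involution, because the Bers embedding's basepoint is moving (it is $\ov{Y_n}$, not a fixed surface) and one has to differentiate in a family; making "the limit of $\q^-(M_n)/t_n$ is $-\wh\phi$" precise requires either a careful second-order-free application of the implicit function theorem for the Bers embedding with moving basepoint, or — cleaner — routing everything through the Hopf differential of the Teichm\"uller map $X_n\to Y_n$ and invoking that the Teichm\"uller map's Beltrami coefficient has the form $t_n\,\bar\phi/|\phi|$ so that its "value" at each end is governed by the same $\phi$ up to the orientation-reversing identification.
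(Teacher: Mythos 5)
Your proposal follows essentially the same route as the paper's proof: a first-order Taylor expansion of the Bers embedding at its basepoint (the paper's Lemma~2.3, using Bers' formula for the derivative), combined with the observation that the unit Teichm\"uller tangent vectors at the two endpoints of the geodesic segment $[X_n,Y_n]$ point towards each other and therefore converge, after moving to the common limit $X$, to opposite vectors $w$ and $-w$ (the paper's Lemma~3.2). Routing the $\q^-$ analysis through the Teichm\"uller map $X_n\to Y_n$, which you propose as the ``cleaner'' alternative, is precisely what the authors do. One small misstatement worth correcting: the limiting quadratic differential $\phi$ is \emph{not} the Hopf differential of the limiting Teichm\"uller direction (i.e.\ $\varphi$ with $\mu_\varphi=\bar\varphi/|\varphi|$), but rather the harmonic representative $\Psi(\mu_\varphi)$ dual to the pairing, which is what your $L_X(\nu)$ actually is; these two differ in general. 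Also, the invocation of ``horizontal/vertical foliations are what ultimately matter'' plays no role here — Theorem~1.5 is purely a statement about quadratic differentials, and the sign comes entirely from the opposite-pointing tangent vectors, not from any foliation-level identification.
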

\vskip .1cm

The second  part of Theorem \ref{thm-main-1} is a consequence of the following theorem. By $||\varphi||_1$ we denote the $L^1$-norm of $\varphi\in \QD_0(X)$. We define the  subset $\LL \subset \QD_0(\Sigma)\times [0,\infty)$ by 
$$
\LL=\left\{(\varphi,t): \varphi\in \QD_0(\Sigma),\, 0\le t<\frac{1}{||\varphi||_1}\right\}. 
$$
\vskip .1cm
\begin{theorem}\label{thm-main-3} There exists  a map $\Map: \LL \rightarrow \MF^2$ with the following properties:
\begin{enumerate}
\item $\Map$ is continuous, 
\vskip .1cm
\item  $\Map(\cdot,0): \QD_0(\Sigma)\times\{0\}  \rightarrow \MF^2_\dagger$ is a homeomorphism,
\vskip .1cm 
\item if  $\Map(\varphi,t)=(\alpha_1,\alpha_2)\in \MF^2$ for some $\varphi\in \QD_0(\Sigma)$, and  $0<t<\frac{1}{||\varphi||_1}$,  then there exits $M\in \QF\setminus \F$ such that 
$(t\alpha_1,t\alpha_2)=\lambda(M)$. 
\end{enumerate}
\end{theorem}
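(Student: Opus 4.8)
\textbf{Proof proposal for Theorem \ref{thm-main-3}.}

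The plan is to build $\Map$ by combining two pieces of structure: the classical parametrization of quadratic differentials by their horizontal and vertical foliations, and a ``schwarzian/Beltrami'' perturbation that interpolates from the Fuchsian locus into $\QF\setminus\F$. For the $t=0$ slice, I would set $\Map(\varphi,0)=(\hor(\varphi),\ver(\varphi))$. That this lands in $\FMeF^2$ and is a homeomorphism onto it is, in spirit, the Gardiner--Masur / Hubbard--Masur circle of ideas: a nonzero holomorphic quadratic differential on a fixed (or varying) Riemann surface is determined by, and determines, its pair of transverse foliations, and two such foliations fill precisely when they arise from a genuine (nonzero) quadratic differential. Concretely, I expect to invoke Hubbard--Masur to realize any filling pair $(\alpha_1,\alpha_2)$ uniquely as $(\hor(\varphi),\ver(\varphi))$ for a unique $(X,\varphi)\in\QD_0(\Sigma)$, and continuity/properness of both directions then gives the homeomorphism in part (2).

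For parts (1) and (3), the strategy is to thicken this picture in the $t$-direction using the Bers machinery set up in the excerpt. Given $(\varphi,t)\in\LL$ with $t>0$, the constraint $t<1/\|\varphi\|_1$ is exactly the condition under which $t\varphi$, viewed as (a multiple of) an integrable quadratic differential, produces a Beltrami coefficient $t\bar\varphi/|\varphi|$ of sup-norm $t\|\varphi\|_1<1$ — or, more precisely, a point in the Bers slice over $X$ via $\beta_X$ whose image one can control. The idea is: let $X$ be the surface carrying $\varphi$; feed a suitably normalized version of $t\varphi$ (and of its mirror $-\wh{\varphi}$, as dictated by Theorem \ref{thm-main-2}) into the Bers embedding to produce a point $M=M(\varphi,t)\in\QF\setminus\F$ with $\q^+(M)$ and $\q^-(M)$ proportional to $\varphi$ and $-\wh\varphi$ up to the scaling recorded in Theorem \ref{thm-main-2}. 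Then set $\Map(\varphi,t)=\big(\tfrac1t\lambda^+(M),\tfrac1t\lambda^-(M)\big)$, so that part (3), $(t\alpha_1,t\alpha_2)=\lambda(M)$, holds essentially by construction. Theorem \ref{thm-main-2} is the compatibility statement that guarantees this rescaled limit is consistent with the $t=0$ slice $(\hor(\varphi),\ver(\varphi))$, which is what makes $\Map$ continuous across $t=0$ and hence gives part (1). Away from $t=0$ continuity is inherited from holomorphic dependence of the Bers embedding and continuity of $\hor,\ver$ on $\QD_0$.

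The main obstacle I anticipate is \emph{not} the algebraic bookkeeping but pinning down the precise normalization that makes the three parts cohere: one must choose the map $(\varphi,t)\mapsto M(\varphi,t)$ so that the first-order data extracted by $\q^\pm$ matches $\varphi$ (resp.\ $-\wh\varphi$) \emph{exactly at the right scale}, using Theorem \ref{thm-main-2} as the linking lemma, while simultaneously ensuring the resulting $M$ stays in the neighborhood where Theorem \ref{thm-main-1}(1) applies and where $\lambda^\pm(M)$ depends continuously on the input. In particular one needs $\lambda^\pm(M)/t$ to converge to $\hor(\varphi)$, $\ver(\varphi)$ as $t\to 0$ along the chosen path, and this is exactly a rescaled-limit statement of the type Theorem \ref{thm-main-2} supplies; the delicate point is that $t$ here is a free parameter rather than the Teichm\"uller distance $t_n$ appearing there, so one must check the two scalings agree to leading order (this should follow since $\dist_\Teich(\pt_\infty^+M,\ov{\pt_\infty^-M})$ is comparable to $t$ under the Bers normalization, by Royden/Teichm\"uller). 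A secondary technical point is verifying that the ``filling'' property is an \emph{open} condition compatible with the parametrization, so that $\Map(\cdot,0)$ is genuinely onto $\FMeF^2$ and the image of $\Map$ for small $t>0$ consists of (rescalings of) pairs realized by quasi-Fuchsian manifolds; here one leans on part (1) of Theorem \ref{thm-main-1} together with the homeomorphism in part (2).
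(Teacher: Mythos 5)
Your overall strategy — start from the Gardiner--Masur homeomorphism on the $t=0$ slice, deform into $\QF\setminus\F$ via the Bers embedding, and use a rescaled-limit statement in the spirit of Theorem~\ref{thm-main-2} to get continuity across $t=0$ — is essentially the paper's strategy. However, there is a genuine gap that your normalization ``I would set $\Map(\varphi,0)=(\hor(\varphi),\ver(\varphi))$'' does not survive. If for $t>0$ you move off the Fuchsian locus by flowing along the Teichm\"uller path whose Beltrami coefficient is $(t\|\varphi\|_1)\,\overline\varphi/|\varphi|$ (the parametrization that is actually compatible with the domain $\LL=\{t\|\varphi\|_1<1\}$), then Bers's derivative formula~\eqref{eq-class} says the Schwarzian $\beta_{\ov{X}}(Y_t)$ leaves the origin in the direction of the \emph{harmonic representative} $\wh{\Psi(\mu_{\varphi^1})}$, not in the direction of $\wh\varphi$. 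Consequently $\tfrac1t\,\q^\pm(M(\varphi,t))\to\pm\,h(\varphi)$ where $h(\varphi)=\|\varphi\|_1\,\Psi(\mu_{\varphi^1})$, so continuity forces $\Map(\varphi,0)=\gamma(h(\varphi))=(\hor(h(\varphi)),\ver(h(\varphi)))$, not $\gamma(\varphi)$. The homeomorphism $h:\QD_0(\Sigma)\to\QD_0(\Sigma)$ — coming from the two standard identifications of $T_X\Teich(\Sigma)$ with $\QD(X)$, via the Teichm\"uller Finsler structure on one side and harmonic Beltrami differentials on the other — is precisely the ingredient your sketch omits, and it is what the whole preliminary section on harmonic Beltrami differentials exists to supply. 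Part~(2) still holds because $\gamma\circ h$ is a homeomorphism onto $\FMF^2$, but you need to \emph{define} the $t=0$ slice to be $\gamma\circ h$, not $\gamma$.

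Your alternative reading — choose $Y_t$ so that $\beta_X(\ov Y_t)=t\varphi$ exactly, which \emph{would} be consistent with $\Map(\varphi,0)=\gamma(\varphi)$ — does not match the theorem either, because it is not well-defined on all of $\LL$. The constraint $t\|\varphi\|_1<1$ controls the sup-norm of the Teichm\"uller Beltrami $(t\|\varphi\|_1)\mu_{\varphi^1}$, but says nothing about whether $t\varphi$ lies in the (Bers-norm-bounded) image of $\beta_X$, so $\beta_X^{-1}(t\varphi)$ need not exist for all $(\varphi,t)\in\LL$. Also note a small slip: $t\overline\varphi/|\varphi|$ has sup-norm $t$, not $t\|\varphi\|_1$; the correct coefficient, and the one matching the definition of $\LL$, is $(t\|\varphi\|_1)\overline\varphi/|\varphi|$. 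Finally, you correctly identify the ``$t$ versus Teichm\"uller distance'' issue; in the paper this is resolved cleanly by the observation that $\dist_\Teich(X_n,Y_n)/(\|\varphi_n\|_1 t_n)\to 1$ along the chosen path, which is exactly what lets Lemma~\ref{lemma-lim-1} be applied with the free parameter $t$ in place of the geodesic parameter. So: right scaffolding, but the harmonic-Beltrami correction $h$ is the load-bearing element you are missing.
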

\vskip .1cm
\begin{remark} The second property implies  that each pair $\Map(\varphi,0)=(\alpha_1,\alpha_2)$, $\varphi\in \QD_0(\Sigma)$, is filling. We use this to show  that the pair  $\Map(\varphi,t)=(\alpha_1,\alpha_2)$ is also filling providing $t$ is small enough.
\end{remark}

\vskip .1cm

\subsection{A brief outline} Given $X,Y\in \Teich(\Sigma_X)$, we define  quadratic differentials $\Phi(X,Y)\in \QD(X)$, and $\Phi(Y,X)\in \QD(Y)$, so that the harmonic Beltrami differential  $\rho^{-2}_X\ov{\Phi(X,Y)}\in \Belt(X)$, and $\rho^{-2}_Y\ov{\Phi(Y,X)}\in \Belt(Y)$, represent tangent vectors to the Teichm\"uller geodesic arc connecting $X$ with $Y$. Moreover, we choose these tangent vectors so they are pointing to each other. 
This implies that the distance (in $\QD(\Sigma_X)$) between the quadratic differentials $\Phi(X,Y)/\dist_{\Teich}(X,Y)$, and $-\Phi(Y,X)/\dist_{\Teich}(X,Y)$, is  small when $\dist_{\Teich}(X,Y)$ is small.
\vskip .1cm
On the other hand, we prove that the distance (in $\QD(X)$) between $\beta_X(\ov{Y})/\dist_{\Teich}(X,Y)$, and $\Phi(X,Y)/\dist_{\Teich}(X,Y)$, is small when $\dist_{\Teich}(X,Y)$ is small. Putting this together proves Theorem \ref{thm-main-2}. We then use this to prove the first part of Theorem \ref{thm-main-1}.
\vskip .1cm

 The map $\Map$ in Theorem \ref{thm-main-3} is constructed as continuous deformation of the map $\Map(\cdot,0): \QD_0(\Sigma)\times\{0\}  \rightarrow \MF^2_\dagger$. The homeomorphism $\Map(\cdot,0)$ is constructed as the composition of  the Gardiner-Masur homeomorphism  $\gamma:\QD_0(\Sigma)\to \FMF^2$, and the homeomorphism 
$h:\QD_0(\Sigma)\to \QD_0(\Sigma)$ which arises from identifying the tangent space $T_X\Teich(\Sigma)$ with $\QD(X)$ using Teichm\"uller Finsler structure, and the harmonic Beltrami differentials, respectively. The second part of Theorem \ref{thm-main-1} follows  by combining  Theorem \ref{thm-main-3} with some basic lemmas about the degree  of continuous self-maps of spheres.
\vskip .1cm

\section{Harmonic Beltrami differentials and the Bers embedding}
In this section we recall  the notion of a harmonic Beltrami differential and explain its connection with the Bers embedding.
We adopt the following notation.
The vector space $\Belt(X)$ is  equipped with the supremum norm $||\mu||_\infty$, $\mu \in \Belt(X)$. 
We consider two norms on the vector space $\QD(X)$. The first one is  the Bers norm
$$
||\phi||=\max_{p\in X} \rho_X^{-2}(p)|\phi(p)|,\quad\quad  \phi \in \QD(X),
$$
where $\rho_X$ is the density of the hyperbolic metric on $X$.
The second one is the $L^1$-norm
$$
||\phi||_1=\int\limits_X |\phi|.
$$
We also let 
$$
\QD_1(X)=\{\phi\in\QD(X): ||\phi||_1=1 \}.
$$
\vskip .1cm

\subsection{Harmonic Beltrami differentials} We say that $\mu,\nu\in \Belt(X)$ are equivalent if
$$
\int\limits_{X} \mu\phi=\int\limits_{X} \nu\phi
$$
for every $\phi\in \QD(X)$.  The quotient space $\Belt(X)$ is naturally identified with $T_X\Teich(\Sigma_X)$.
The following proposition states that each equivalence class in $\Belt(X)$ contains a unique harmonic Beltrami differential (see \cite{ahlfors}).
\vskip .1cm
\begin{proposition}\label{prop-lab} For every $\mu\in \Belt(X)$ there exists a unique $\Psi(\mu) \in \QD(X)$ such that
\begin{equation}\label{eq-vatos}
\int\limits_{X} \mu\phi=\int\limits_{X} \rho^{-2}_X\ov{\Psi(\mu)}\phi
\end{equation}
for every $\phi \in \QD(X)$. 
\end{proposition}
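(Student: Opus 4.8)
The plan is to prove existence and uniqueness of $\Psi(\mu)$ by the standard orthogonal-projection argument in the Bergman/Hilbert space of quadratic differentials, which is exactly the Ahlfors–Bers theory of harmonic Beltrami differentials referenced in \cite{ahlfors}. First I would fix $X\in\Teich(\Sigma_g)$ and equip $\QD(X)$ with the Hermitian inner product
$$
\langle \phi,\psi\rangle_X=\int\limits_X \rho_X^{-2}\,\phi\,\ov{\psi},\qquad \phi,\psi\in\QD(X).
$$
Since $\Sigma_g$ is closed of genus $g\ge 2$, the space $\QD(X)$ is finite dimensional (of complex dimension $3g-3$), so it is automatically a (finite dimensional) Hilbert space with respect to $\langle\cdot,\cdot\rangle_X$; in particular no completeness issues arise and every linear functional on it is represented by a unique vector.

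Next I would observe that for a fixed $\mu\in\Belt(X)$ the assignment
$$
\phi\ \longmapsto\ \ov{\int\limits_X \mu\,\phi}
$$
is a (conjugate-)linear functional on the finite dimensional space $\QD(X)$, hence is bounded; here boundedness uses only that $\mu\in L^\infty$ and that $\QD(X)\ni\phi$ has $\int_X|\phi|<\infty$, which holds because $X$ is compact and $\phi$ is holomorphic. By the Riesz representation theorem there is a unique $q\in\QD(X)$ with $\ov{\int_X\mu\phi}=\langle \phi, q\rangle_X=\int_X \rho_X^{-2}\phi\,\ov q$ for all $\phi\in\QD(X)$; equivalently, taking conjugates, $\int_X\mu\phi=\int_X\rho_X^{-2}\,\ov{q}\,\phi$. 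Setting $\Psi(\mu):=q$ gives a quadratic differential satisfying \eqref{eq-vatos}, and uniqueness is immediate: if $\Psi_1,\Psi_2$ both satisfy \eqref{eq-vatos} then $\int_X\rho_X^{-2}(\ov{\Psi_1}-\ov{\Psi_2})\phi=0$ for all $\phi\in\QD(X)$; taking $\phi=\Psi_1-\Psi_2$ shows $\int_X\rho_X^{-2}|\Psi_1-\Psi_2|^2=0$, hence $\Psi_1=\Psi_2$.

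There is essentially no hard step here — the statement is flagged as ``obvious'' in spirit and the only things to get right are bookkeeping of complex conjugates in the pairing and the remark that finite dimensionality of $\QD(X)$ for a closed surface makes the functional analysis trivial. The one point deserving a sentence of care is that the \emph{harmonic} Beltrami differential attached to $\mu$ is by definition $\rho_X^{-2}\ov{\Psi(\mu)}$, and the content of the proposition (beyond mere existence of $\Psi(\mu)$) is that this particular representative lies in the equivalence class of $\mu$ and is the unique harmonic one in that class; both assertions fall straight out of \eqref{eq-vatos} together with the uniqueness argument above, since two harmonic Beltrami differentials $\rho_X^{-2}\ov{\Psi_1},\rho_X^{-2}\ov{\Psi_2}$ are equivalent precisely when $\Psi_1=\Psi_2$.
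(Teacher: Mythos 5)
The paper gives no proof of this proposition---it simply cites Ahlfors \cite{ahlfors}---so there is no argument in the text to compare against; your proposal supplies the standard Riesz-representation argument on the finite-dimensional Bergman space, which is exactly what that reference contains. The overall structure is correct, but there is a conjugation slip in the middle step that would not survive scrutiny. You introduce the conjugate-linear functional $\phi\mapsto\ov{\int_X\mu\phi}$ and then write $\ov{\int_X\mu\phi}=\langle\phi,q\rangle_X$; this equation cannot hold identically, since the left side is conjugate-linear in $\phi$ while the right side, being the first slot of the Hermitian pairing, is linear in $\phi$. For a conjugate-linear functional the Riesz representation puts the unknown in the first slot, $\ov{\int_X\mu\phi}=\langle q,\phi\rangle_X=\int_X\rho_X^{-2}q\,\ov{\phi}$, and conjugating that indeed recovers the desired identity $\int_X\mu\phi=\int_X\rho_X^{-2}\ov{q}\phi$. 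Simpler still, skip the conjugation entirely: $\phi\mapsto\int_X\mu\phi$ is already a bounded linear functional, Riesz gives a unique $q$ with $\int_X\mu\phi=\langle\phi,q\rangle_X=\int_X\rho_X^{-2}\ov{q}\,\phi$, and you set $\Psi(\mu)=q$. Your uniqueness argument and the closing remark about harmonic representatives of equivalence classes are correct as stated.
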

\vskip .1cm
\subsection{The first derivative of the  Bers embedding}

Suppose $\mu\in \Belt(X)$ with $||\mu||_\infty\le 1$. Let    $f_t:X\to Y_t\in \Teich(\Sigma_X)$, $0\le t<1$, be the path of quasiconformal maps $f_t$ whose Beltrami differential is equal to $t\mu$. Then $t \to Y_t$ is a smooth path in $\Teich(\Sigma_X)$. 
\vskip .1cm
Consider the path $\beta_{\ov{X}}(Y_t)$ in $\QD(\ov{X})$. Bers computed the first derivative of this path at the time $t=0$ (see Section 8 in \cite{bers})
\begin{equation}\label{eq-class}
\frac{d}{dt} \,  \beta_{\ov{X}}(Y_t) \bigg\vert_{t=0}=\wh{\Psi(\mu)}.
\end{equation}
\vskip .1cm
\begin{lemma}\label{lemma-class} For every compact set $K \subset \Teich(\Sigma_g)$ there exist constants
$C=C(K)>0$, and $t_0=t_0(K)$, such that for every $0\le t \le t_0$ the inequality
\begin{equation}\label{eq-mac}
||\beta_{\ov{X}}(Y_t)-t\wh{\Psi(\mu)}||\le Ct^2
\end{equation}
holds assuming $X\in K$.
\end{lemma}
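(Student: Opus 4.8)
The strategy is to upgrade Bers's first-order computation \eqref{eq-class} to a uniform second-order estimate by combining a Cauchy-estimate argument with compactness. The key point is that the map
$$
(X,\mu,t)\longmapsto \beta_{\ov X}(Y_t)\in \QD(\ov X)
$$
is holomorphic in $t$ for $t$ in a disc of radius slightly larger than $1$ in $\C$ (the Bers embedding depends holomorphically on the Beltrami coefficient, and $t\mu$ has $\|t\mu\|_\infty<1$ for $|t|<1/\|\mu\|_\infty$, which covers a neighbourhood of $[0,1]$ when $\|\mu\|_\infty\le 1$), and this dependence varies continuously with the basepoint $X$ in the compact set $K$. From holomorphicity alone one gets a Taylor expansion $\beta_{\ov X}(Y_t)=\beta_{\ov X}(Y_0)+t\,\wh{\Psi(\mu)}+O(t^2)$, where $\beta_{\ov X}(Y_0)=\beta_{\ov X}(X)=\zero$ since the Bers embedding sends the basepoint to the origin; the content of the lemma is that the implied constant in the $O(t^2)$ can be taken uniform over $X\in K$ and over $\mu$ with $\|\mu\|_\infty\le 1$.

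First I would fix the analytic setup: for $X\in\Teich(\Sigma_g)$ and $\mu\in\Belt(X)$ with $\|\mu\|_\infty\le 1$, the solution $f_t$ of the Beltrami equation with coefficient $t\mu$ exists and depends holomorphically on $t\in\D_{r}$ for any fixed $r<1/\|\mu\|_\infty$ (so in particular on a fixed disc $\D_{r_0}$ with $r_0>1$ independent of $\mu$, e.g.\ $r_0=\tfrac32$ works whenever $\|\mu\|_\infty\le\tfrac12$; for the general case $\|\mu\|_\infty\le 1$ one works on $\D_{r}$ with $r$ just above $1$). Composing with the Bers embedding $\beta_{\ov X}$, which is a holomorphic map on a bounded domain, gives a holomorphic map $g_{X,\mu}:\D_{r}\to\QD(\ov X)$ with $g_{X,\mu}(0)=\zero$ and $g_{X,\mu}'(0)=\wh{\Psi(\mu)}$ by \eqref{eq-class}. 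Writing $h_{X,\mu}(t)=g_{X,\mu}(t)-t\,\wh{\Psi(\mu)}$, this is holomorphic with $h_{X,\mu}(0)=0$ and $h_{X,\mu}'(0)=0$, so $h_{X,\mu}(t)/t^2$ extends holomorphically across $0$, and by the Cauchy integral formula
$$
\|h_{X,\mu}(t)\|\le \frac{|t|^2}{r-|t|}\sup_{|\zeta|=r}\frac{\|h_{X,\mu}(\zeta)\|}{r}
$$
for $|t|<r$; hence it suffices to bound $\sup_{|\zeta|=r}\|h_{X,\mu}(\zeta)\|$ uniformly.

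The remaining task — and the main obstacle — is the uniform bound on $\sup_{|\zeta|=r}\|g_{X,\mu}(\zeta)\|$ and on $\|\wh{\Psi(\mu)}\|$ as $X$ ranges over $K$ and $\mu$ over the unit ball of $\Belt(X)$. For the latter, the operator $\mu\mapsto\Psi(\mu)$ has operator norm (from $\|\cdot\|_\infty$ to the Bers norm $\|\cdot\|$) bounded on each fibre, and this bound varies continuously with $X$, hence is bounded on $K$; composing with the isometry-type map $\phi\mapsto\wh\phi$ handles $\|\wh{\Psi(\mu)}\|$. For $\sup_{|\zeta|=r}\|g_{X,\mu}(\zeta)\|$ one uses that the image of the Bers embedding $\beta_{\ov X}$ lies in a ball of radius $\le 6$ in the Bers norm (the Nehari–Kraus bound, uniform in $X$), so in fact $\|g_{X,\mu}(\zeta)\|\le 6$ for all $\zeta$ in the disc of holomorphy — here the only subtlety is ensuring the radius of holomorphy can be taken $>1$ uniformly, which forces one to restrict to $\|\mu\|_\infty\le 1$ and take $r$ slightly larger than $1$ after possibly shrinking to $\|\mu\|_\infty\le 1-\epsilon$, or equivalently to phrase the final estimate only for $t\le t_0<1$ as in the statement. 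Assembling: with $r$ fixed just above $t_0$ and $|t|\le t_0$, the Cauchy estimate gives $\|h_{X,\mu}(t)\|\le C t^2$ with $C=C(K)$ absorbing the bound $6$ and the $(r-t_0)^{-1}$ factor, which is exactly \eqref{eq-mac}. Since $Y_t$ and hence $\beta_{\ov X}(Y_t)$ depends only on the equivalence class, and we may replace $\mu$ by its harmonic representative without changing $\Psi(\mu)$, no generality is lost.
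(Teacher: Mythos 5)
Your proof is correct, and it is a more careful version of the paper's one-line argument. The paper simply observes that $t\mapsto\beta_{\ov X}(Y_t)$ is smooth and then invokes compactness of $K$ to get uniform constants; what you do instead is use holomorphicity in the complex parameter $t$ together with the Nehari-type bound on the image of the Bers embedding, and extract the quadratic remainder via a Schwarz-lemma/Cauchy estimate on the holomorphic function $h_{X,\mu}(t)=\beta_{\ov X}(Y_t)-t\,\wh{\Psi(\mu)}$ vanishing to second order at $0$. The real gain of your route is that it makes transparent the uniformity of $C$ and $t_0$ over all $\mu$ with $\|\mu\|_\infty\le 1$: since the unit ball of $\Belt(X)$ is not compact, the paper's appeal to compactness of $K$ alone does not obviously cover the $\mu$-dependence, whereas your argument gets $\mu$-uniformity for free from the universal bound on $\|\beta_{\ov X}(\cdot)\|$ and from the $K$-uniform bound on the operator norm of $\mu\mapsto\Psi(\mu)$. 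Two small remarks: the Kraus--Nehari constant for the image of the Bers embedding is $3/2$ rather than $6$ (though any finite universal bound serves your purpose), and your hesitation about the radius $r$ is unnecessary once you commit, as you ultimately do, to fixing $t_0<r<1$; the Cauchy estimate $\|h_{X,\mu}(t)\|\le M|t|^2/\bigl(r(r-|t|)\bigr)$ then gives \eqref{eq-mac} with $C=M/\bigl(r(r-t_0)\bigr)$.
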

\begin{proof} Since the Bers embedding is a holomorphic  map, and since $Y_t$ is a smooth path in $\Teich(\Sigma_X)$, it follows that  $t\to\beta_{\ov{X}}(Y_t)$ is a smooth  path in $\QD(\ov{X})$. Thus, applying (\ref{eq-class}), and since $K$ is compact, we see that
there exists $t_0=t_0(K)>0$, and $C=C(K)>0$,  such that   (\ref{eq-mac}) holds for $0\le t\le t_0$.
\end{proof}

\vskip .1cm

\section{Comparing $\beta_X(\ov{Y})$ and $\beta_{\ov{Y}}(X)$} 
In this section, we utilise  the notions from the previous section and  define the map  $(X,Y) \to \Phi(X,Y)\in \QD(X)$. Relying on  the comparison between $\Phi(X,Y)$ and $\beta_X(\ov{Y})$, we complete the proof of Theorem \ref{thm-main-2}.

\vskip .1cm

\subsection{The differential $\Phi(X,Y)$} We begin with the following definition.
\vskip .1cm
\begin{definition} For $\varphi\in \QD_1(X)$ we let 
$$
\mu_\varphi=\frac{\ov{\varphi}}{|\varphi|}.
$$
\end{definition} 
\vskip .1cm
Let $X,Y\in\Teich(\Sigma_X)$, and consider the  Teichm\"uller map $f:X\to Y$. The Beltrami differential of $f$ is of the form
\begin{equation}\label{eq-tmap}
\frac{\ov{\pt}f}{\pt f}=k_{XY}\frac{\ov{\varphi}}{|\varphi|}=k_{XY}\mu_\varphi
\end{equation}
for some $\varphi \in \QD_1(X)$, and $0\le k_{XY}<1$.
Here 
\begin{equation}\label{eq-k}
\frac{1}{2}\log \frac{1+k_{XY}}{1-k_{XY}}=\dist_\Teich(X,Y).
\end{equation}

\vskip .1cm
\begin{definition} Define $\Phi(X,Y)\in \QD(X)$ by letting 
$$
\Phi(X,Y)=\Psi\big(k_{XY}\mu_\varphi)=k_{XY}\Psi\big(\mu_\varphi), 
$$
where $\Psi(k_{XY}\mu_\varphi)$ is the quadratic differential defined by Proposition \ref{prop-lab}.
\end{definition}
\vskip .1cm

\subsection{Comparing $\Phi(X,Y)$ and $\Phi(Y,X)$} In the following lemma we compare the limits of suitably normalised 
differentials $\Phi(X,Y)$, and $\Phi(Y,X)$, respectively.

\begin{lemma}\label{lemma-lim} Suppose $X, X_n,Y_n\in \Teich(\Sigma)$, are such that $X_n\ne Y_n$ for every $n\in \N$,  and that both  sequences $X_n$, and $Y_n$, converge to $X$. Then there exits $\varphi\in \QD_1(X)$   so that (after passing to a subsequence) we have
$$
\lim_{n\to \infty} \frac{\Phi(X_n,Y_n)}{k_{X_{n}Y_{n}}}=\Psi(\mu_\varphi) \quad \quad \quad \lim_{n\to \infty} \frac{\Phi(Y_n,X_n)}{k_{X_{n}Y_{n}}}=-\Psi(\mu_\varphi).
$$
\end{lemma}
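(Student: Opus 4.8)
The plan is to understand the two Teichmüller maps $f_n : X_n \to Y_n$ and $g_n : Y_n \to X_n$ as "inverse" geodesic data and track how the operator $\Psi$ behaves under taking limits. First I would normalize: write the Beltrami differential of $f_n$ as $k_n \mu_{\varphi_n}$ with $\varphi_n \in \QD_1(X_n)$ and $k_n = k_{X_n Y_n}$, and similarly the Beltrami of $g_n$ as $k_n \mu_{\psi_n}$ with $\psi_n \in \QD_1(Y_n)$ (the contraction factor is the same because the Teichmüller distance is symmetric, via \eqref{eq-k}). Since $X_n, Y_n \to X$, we may pull everything back to a fixed surface $X$ (or work in local charts) and use compactness of $\QD_1(X)$ — the unit sphere in a finite-dimensional space — to extract a subsequence along which $\varphi_n \to \varphi \in \QD_1(X)$. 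The first claim, $\Phi(X_n,Y_n)/k_n = \Psi(\mu_{\varphi_n}) \to \Psi(\mu_\varphi)$, then follows once we know $\Psi$ is continuous in the appropriate sense; this continuity is essentially the content of Proposition \ref{prop-lab} together with the fact that $\mu_{\varphi_n} = \ov{\varphi_n}/|\varphi_n|$ depends continuously on $\varphi_n$ in $L^1$ (away from the zeros of $\varphi$, which form a measure-zero set), and that integration against a fixed basis of $\QD(X)$ is continuous under $L^1$-convergence of Beltrami coefficients.

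The substance is the second limit, and the key geometric input is that $g_n$ is, up to the marking, the inverse of $f_n$: the Teichmüller geodesic from $X_n$ to $Y_n$ and the one from $Y_n$ to $X_n$ are the same geodesic traversed in opposite directions. Concretely, $\psi_n$ is (the unit-$L^1$ normalization of) the pushforward $(f_n)_* \varphi_n$, i.e. the terminal quadratic differential of the Teichmüller map, and the Beltrami coefficient of $g_n$ at $Y_n$ is $-k_n \mu_{\psi_n}$ in the sense that it represents the tangent vector at $Y_n$ pointing back toward $X_n$. I would make this precise by recalling the standard description of Teichmüller maps in natural coordinates: in the $\varphi_n$-coordinate on $X_n$ the map $f_n$ is affine, stretching by $K_n = (1+k_n)/(1-k_n)$ horizontally and contracting vertically, and in the resulting natural coordinate on $Y_n$ the inverse map does the reverse with the roles of horizontal and vertical swapped — which is exactly the sign flip. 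Transporting this through $\Psi$: since $f_n \to \mathrm{id}$ (as $X_n, Y_n \to X$), the pushforward map $(f_n)_*$ converges to the identity on $\QD(X)$, so $\psi_n \to \varphi$ as well (same subsequence), and the sign-reversal of the tangent direction yields $\Phi(Y_n,X_n)/k_n = \Psi(-\mu_{\psi_n} + o(1)) \to -\Psi(\mu_\varphi)$.

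The main obstacle I anticipate is making the "pointing toward each other" / sign statement rigorous at the level of the harmonic representatives $\Psi(\mu_\varphi)$ rather than merely at the level of cotangent vectors (where it is essentially a formal fact about Teichmüller's theorem). One has to control the discrepancy between the genuine Beltrami coefficient of $g_n$ pulled back to $X_n$ and the model $-k_n\mu_{\varphi_n}$, showing this discrepancy is $o(k_n)$ — this requires quantitative continuity of the Teichmüller map and its natural coordinates as $X_n,Y_n \to X$, i.e. that the affine model converges uniformly. A clean way to package this is: $\Phi(X_n,Y_n)$ and $-\Phi(Y_n,X_n)$, after dividing by $k_n$, are the harmonic representatives of the two unit tangent vectors to a short geodesic arc that point at each other, and as the arc shrinks to a point these two unit vectors converge to the same vector in $T_X\Teich(\Sigma)$; continuity of $\mu \mapsto \Psi(\mu)$ and of the harmonic-representative identification then closes the argument. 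The only care needed beyond this is the subsequence bookkeeping, so that the $\varphi$ extracted for the first limit is the same one governing the second — which is automatic since both $\varphi_n$ and $\psi_n$ converge to $\varphi$ along the chosen subsequence.
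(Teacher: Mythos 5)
Your proposal is correct and follows essentially the same route as the paper: extract a convergent subsequence of the unit quadratic differentials by compactness, identify the two Beltrami coefficients with unit tangent vectors to the Teichmüller geodesic arc pointing toward one another, observe that as $X_n,Y_n\to X$ these limit to $w$ and $-w$ for a single $w\in T_X\Teich(\Sigma)$, and then pass to the limit through the (linear, continuous) operator $\Psi$. The paper phrases the sign flip abstractly at the level of tangent vectors ($u_n\to w$, $v_n\to -w$, hence the terminal differential $b_n$ limits to $-\varphi$), whereas you unpack it via the pushforward $(f_n)_*$ and the affine model of the Teichmüller map; the content is the same, and your anticipated ``main obstacle'' is exactly the point the paper also asserts rather than proves in detail.
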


\begin{proof} Consider the Teichm\"uller maps $f_n:X_n\to Y_n$, and $g_n:Y_n\to X_n$, with the Beltrami differentials
$$
\frac{\ov{\pt}f_n}{\pt f_n}=k_n\frac{\ov{a_n}}{|a_n|}=k_n\mu_{a_{n}},\quad\quad \frac{\ov{\pt}g_n}{\pt g_n}=k_n\frac{\ov{b_n}}{|b_n|}=k_n\mu_{b_{n}},
$$
where  $a_n\in \QD_1(X_n)$, and $b_n\in \QD_1(Y_n)$. Here we use the notation  
$k_n=k_{X_{n}Y_{n}}=k_{Y_{n}X_{n}}$.
\vskip .1cm
After passing to a subsequence, we may assume that $a_n\to a$, and $b_n\to b$, where $a,b\in \QD_1(X)$.
Thus,  $\mu_{a_{n}}\to \mu_a$, and  $\mu_{b_{n}}\to \mu_b$, in the bundle $\{\Belt(Z)\}_{Z\in \Teich(\Sigma_{X})}$, when $n\to \infty$.
\vskip .1cm

On the other hand, the Beltrami differentials $\mu_{a_{n}}$ and  $\mu_{b_{n}}$ represent the unit  vectors   $u_n\in T_{X_{n}}\Teich(\Sigma_{X})$, and $v_n\in T_{Y_{n}}\Teich(\Sigma_{X})$, respectively. These  vectors $u_n$ and $v_n$ are tangent  to the Teichm\"uller geodesic arc connecting $X_n$ and $Y_n$, and are pointing towards each other. Therefore, there exists a unit vector $w \in T_{X}\Teich(\Sigma_{X})$ such that $u_n\to w$, and $v_n\to -w$, where the convergence is in the bundle 
$T\Teich(\Sigma_{X})$.
\vskip .1cm

But, the vector $w$ is represented by $\mu_a$, and the vector $-w$ is represented by $\mu_b$. It follows that 
$a=-b$. Set $\varphi=a$. We have shown that 
$$
\lim_{n\to \infty} \Psi(\mu_{a_{n}})=\Psi(\mu_{\varphi}),\quad \quad\quad  \lim_{n\to \infty}\Psi(\mu_{b_{n}})= 
\Psi(\mu_{-\varphi})=-\Psi(\mu_{\varphi}).
$$
This proves the lemma.
\end{proof}

\vskip .1cm

\subsection{Comparing $\beta_X(\ov{Y})$ and  $\Phi(X,Y)$} In this subsection we compare $\beta_X(\ov{Y})$ with $\beta_{\ov{Y}}(X)$ when $\dist_\Teich(X,Y)$ is  small.
\vskip .1cm
\begin{lemma}\label{lemma-lim-1} Suppose $X, X_n,Y_n\in \Teich(\Sigma)$, are such that $X_n\ne Y_n$ for  $n\in \N$,  and that both  $X_n$, and $Y_n$, converge to $X$. There exits $\varphi\in \QD_1(X)$,  so that (after passing to a subsequence) we have
$$
\lim_{n\to \infty} \frac{\beta_{X_{n}}(\ov{Y}_n)}{ \dist_{\Teich(X_{n},Y_{n})}}=\Psi(\mu_\varphi) \quad \quad \quad \lim_{n\to \infty} \frac{\beta_{\ov{Y}_{n}}(X_n)}{ \dist_{\Teich(X_{n},Y_{n})}}=-\wh{\Psi(\mu_\varphi)}.
$$
\end{lemma}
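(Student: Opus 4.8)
The plan is to reduce the statement to the two comparison estimates
$$
\beta_{X_n}(\ov{Y}_n)=\Phi(X_n,Y_n)+O(k_n^2),\qquad \beta_{\ov{Y}_n}(X_n)=\wh{\Phi(Y_n,X_n)}+O(k_n^2),
$$
where $k_n=k_{X_nY_n}$ and the implied constant is uniform once the base surfaces lie in a fixed compact set; the conclusion then follows from Lemma~\ref{lemma-lim}. Set $d_n=\dist_{\Teich}(X_n,Y_n)>0$. Since $X_n,Y_n\to X$ we have $d_n\to 0$, and then (\ref{eq-k}) gives $k_n\to 0$ and $k_n/d_n\to 1$. Let $f_n\colon X_n\to Y_n$ and $g_n\colon Y_n\to X_n$ be the Teichm\"uller maps with Beltrami differentials $k_n\mu_{a_n}$, $k_n\mu_{b_n}$, $a_n\in\QD_1(X_n)$, $b_n\in\QD_1(Y_n)$, as in the proof of Lemma~\ref{lemma-lim}; pass to the subsequence furnished by Lemma~\ref{lemma-lim}, along which $\Phi(X_n,Y_n)/k_n\to\Psi(\mu_\varphi)$ and $\Phi(Y_n,X_n)/k_n\to-\Psi(\mu_\varphi)$ for some $\varphi\in\QD_1(X)$. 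Let $K\subset\Teich(\Sigma_g)$ be the compact set consisting of the $X_n$, the $Y_n$, their mirror images, and $X$, $\ov{X}$, and let $C=C(K)$, $t_0=t_0(K)$ be as in Lemma~\ref{lemma-class}. Note that $\wh{\,\cdot\,}$ is conjugate-linear, preserves the Bers norm, and satisfies $\wh{\wh{\psi}}=\psi$; since $k_n\in\R$ this gives $\wh{k_n\Psi(\mu)}=k_n\wh{\Psi(\mu)}$.

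To prove the two estimates I would invoke Lemma~\ref{lemma-class}, matching in each case the base surface of the relevant Bers embedding with a Teichm\"uller path. Since $\ov{\Sigma_{\ov{Y}_n}}=\Sigma$, the map $\beta_{\ov{Y}_n}$ accepts $X_n$ as argument, and $X_n$ is the time-$k_n$ point of the path in $\Teich(\Sigma)$ starting at $Y_n$ with Beltrami differential $t\mu_{b_n}$; hence for $n$ with $k_n\le t_0$, Lemma~\ref{lemma-class} yields $\|\beta_{\ov{Y}_n}(X_n)-k_n\wh{\Psi(\mu_{b_n})}\|\le Ck_n^2$, and $k_n\wh{\Psi(\mu_{b_n})}=\wh{\Phi(Y_n,X_n)}$ by the definition of $\Phi(Y_n,X_n)$. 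For $\beta_{X_n}(\ov{Y}_n)$, which lies in $\QD(X_n)$ although $\beta_{X_n}$ accepts arguments only in $\Teich(\ov\Sigma)$, I would pass through the mirror surface $\ov{X}_n$ via the equivariance of the Bers embedding under the mirror involution, $\beta_{X_n}(\ov{Y}_n)=\wh{\beta_{\ov{X}_n}(Y_n)}$, which comes from the standard description of the Bers embedding by the Schwarzian derivative, the latter being equivariant under complex conjugation of domain and range. Applying Lemma~\ref{lemma-class} to $\beta_{\ov{X}_n}(Y_n)$ with base surface $\ov{X}_n$ (so that $Y_n$ is the time-$k_n$ point of the path from $X_n$ with Beltrami differential $t\mu_{a_n}$) gives $\|\beta_{\ov{X}_n}(Y_n)-\wh{\Phi(X_n,Y_n)}\|\le Ck_n^2$, and applying the norm-preserving involution $\wh{\,\cdot\,}$ produces $\beta_{X_n}(\ov{Y}_n)=\wh{\beta_{\ov{X}_n}(Y_n)}=\Phi(X_n,Y_n)+O(k_n^2)$.

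Finally, dividing the two comparison estimates by $d_n$ and using $k_n/d_n\to 1$, $Ck_n^2/d_n\to 0$, the continuity of $\wh{\,\cdot\,}$ as a bundle map lying over the mirror involution, and the two limits from Lemma~\ref{lemma-lim}, I would conclude
$$
\frac{\beta_{X_n}(\ov{Y}_n)}{d_n}\longrightarrow\Psi(\mu_\varphi),\qquad \frac{\beta_{\ov{Y}_n}(X_n)}{d_n}\longrightarrow\wh{-\Psi(\mu_\varphi)}=-\wh{\Psi(\mu_\varphi)},
$$
which is the assertion. The step needing the most care is the bookkeeping of the mirror map: identifying the correct Teichm\"uller path for each Bers embedding and checking the equivariance $\beta_X(\ov Y)=\wh{\beta_{\ov X}(Y)}$. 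An alternative for the first estimate that avoids this equivariance is to apply Lemma~\ref{lemma-class} directly with base surface $\ov{X}_n$ and $\mu=\mu_{\wh{a_n}}$ — the Beltrami direction of the mirror of $f_n$ — and then use the naturality identity $\wh{\Psi(\wh{\mu})}=\Psi(\mu)$, proved by changing variables through $\iota$ in the defining relation (\ref{eq-vatos}). Everything else is soft, given Lemmas~\ref{lemma-class} and~\ref{lemma-lim} and the elementary asymptotics of $k_n$ and $d_n$.
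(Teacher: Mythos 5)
Your proposal is correct and takes essentially the same approach as the paper: reduce both Bers embeddings to the quadratic differentials $\Phi(X_n,Y_n)$ and $\wh{\Phi(Y_n,X_n)}$ via the $O(k_n^2)$ estimate from Lemma~\ref{lemma-class}, then invoke Lemma~\ref{lemma-lim} and the asymptotics $k_n/d_n\to 1$. You are in fact somewhat more careful than the paper's two-line argument, which applies Lemma~\ref{lemma-class} to both terms without spelling out the mirror bookkeeping needed for $\beta_{X_n}(\ov{Y}_n)$; your explicit identification of the relevant Teichm\"uller path for each embedding, together with either the Schwarzian equivariance $\beta_X(\ov Y)=\wh{\beta_{\ov X}(Y)}$ or the naturality $\wh{\Psi(\wh\mu)}=\Psi(\mu)$, supplies exactly the step the paper leaves implicit.
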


\begin{proof} From (\ref{eq-mac}) we know that for some constant $C_1=C_1(K)$, the inequalities
\begin{equation}\label{eq-q}
\big\vert\big\vert \frac{\beta_{X_{n}}(\ov{Y}_n)}{k_{n}} -\frac{\Phi(X_n,Y_n)}{k_{n}}\big\vert\big\vert\le C_1k_n,
\end{equation}
and
\begin{equation}\label{eq-q}
\big\vert\big\vert \frac{\beta_{\ov{Y}_{n}} (X_n)}{k_{n}} -\frac{\wh{\Phi(Y_n,X_n)}}{k_{n}}\big\vert\big\vert\le C_1k_n,
\end{equation}
hold. 
Combining this with Lemma \ref{lemma-lim}  implies that 
$$
\lim_{n\to \infty} \frac{\beta_{X_{n}}(\ov{Y}_n)}{k_{n}}=\Psi(\mu_\varphi) \quad \quad \quad \lim_{n\to \infty} \frac{\beta_{\ov{Y}_{n}}(X_n)}{ k_{n}}=-\wh{\Psi(\mu_\varphi)},
$$
for some $\varphi\in \QD_1(X)$. Together with 
\begin{equation}\label{eq-k-111}
\lim_{n\to \infty} \frac{\dist_\Teich(X_{n},Y_{n})}{k_{n}}=1,
\end{equation}
this proves the lemma. Note that (\ref{eq-k-111}) follows from (\ref{eq-k}).
\end{proof}

\vskip .1cm

\subsection{Proof of Theorem \ref{thm-main-2}}

Suppose that $M_n \in \QF\setminus \F$ is a sequence of  quasifuchsian 3-manifolds converging to a  Fuchsian manifold $M\in \F$. We let 
$$
X_n=\pt^+_\infty M_n \quad\quad\quad \ov{Y}_n=\pt^-_\infty M_n.
$$
Then $\q^+(M_n)=\beta_{X_{n}}(\ov{Y}_n)$, and $\q^-(M_n)=\beta_{\ov{Y}_{n}}(X_n)$.
The proof of Theorem  \ref{thm-main-2}  follows from Lemma \ref{lemma-lim-1}.

\vskip .1cm

\section{Constructing $\Map$ and the proof of Theorem \ref{thm-main-3}}

In this section we construct the map $\Map: \LL \rightarrow \MF^2$, and prove Theorem \ref{thm-main-3}.
Let $\gamma:\QD(\Sigma_g)\to \FMF^2$ be the map given by $\gamma(\phi)=(\hor(\phi),\ver(\phi))$. As it is well known,   combining the results from Kerckhoff \cite{ker},  Gardiner-Masur \cite{g-m}, and Wentworth \cite{wentworth},  shows that $\gamma$ is a homeomorphism.

\vskip .1cm

\subsection{Constructing $\Map$}

For $\varphi\in \QD_0(X)$, we let $\varphi^1=\varphi/||\varphi||_1$. Then $\varphi^1\in \QD_1(X)$, and we consider the corresponding Beltrami differential $\mu_{\varphi^{1}}\in \Belt(X)$. Define 
$$
h:\QD_0(\Sigma)\to \QD_0(\Sigma)
$$
by $h(\varphi)=||\varphi||_1\Psi(\mu_{\varphi^{1}})$. Clearly, $h$ is a (homogeneous) homeomorphism.

\vskip .1cm

Let    $f_t:X\to Y_t\in \Teich(\Sigma)$, $0\le t<\frac{1}{||\varphi||_{1}}$, be the path of quasiconformal maps $f_t$ whose Beltrami differential is equal to $(t||\varphi||_{1})\mu_{\varphi^{1}}$.  We define $\Map$ by letting 
$$
\Map(\varphi,t)=\frac{1}{t}\left(  \hor(\beta_{X}(\ov{Y}_t )),\hor(\beta_{\ov{Y}_{t}}(X)) \right).
$$
This defines the map $\Map$ on $\LL_0=\LL\setminus \big(\QD_0(\Sigma)\times \{0\}$\big). It remains to show that $\Map$ extends continuously to the entire domain $\LL$.

\vskip .1cm

\subsection{Proof of Theorem \ref{thm-main-3}} 

We see from the definition of  $\lambda$ that  if $(\alpha_1,\alpha_2)\in \Map\left(\QD_0(\Sigma)\times \{t\}\right)$, then $(t\alpha_1,t\alpha_2)\in \lambda(\QF\setminus \F)$. This proves the property (3) of $\Map$.

\vskip .1cm
The map $\Map$ is continuous on $\LL_0$.  To finish the proof of the 
theorem we need to prove that $\Map$ is continuous on $\LL$, and that $\Map(\phi,0)$ is a homeomorphism.  Both statements follow from the following lemma.
\vskip .1cm

\begin{lemma} Let  $\varphi_n, \varphi\in \QD_0(\Sigma)$, and $t_n>0$, $n\in \N$. Suppose that  $\varphi_n\to \varphi$ in $\QD_0(\Sigma)$, and $t_n\to 0$, when $n\to \infty$. Then $\Map(\varphi_n,t_n)\to (\gamma\circ h)(\varphi)$, when $n\to \infty$.
\end{lemma}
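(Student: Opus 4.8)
The plan is to reduce the statement to the convergence results already established in Section 3, together with the continuity of the map $\gamma$ and the homogeneity of $h$. Write $s_n = t_n \|\varphi_n\|_1$, which tends to $0$ since $\|\varphi_n\|_1 \to \|\varphi\|_1 < \infty$ and $t_n \to 0$. Let $f_t^{(n)}: X_n \to Y_t^{(n)}$ be the quasiconformal path with Beltrami differential $(t\|\varphi_n\|_1)\mu_{\varphi_n^1}$, where $X_n$ is the base surface of $\varphi_n$ (after passing to a subsequence we may assume all $\varphi_n$ lie over surfaces converging to the base surface $X$ of $\varphi$; the statement is about a fixed component $\QD_0(\Sigma)$, so this is a convergence in the bundle). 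The key point is that $Y_{t_n}^{(n)}$ is obtained from $X_n$ by following the Teichm\"uller geodesic in the direction $\mu_{\varphi_n^1}$ for Teichm\"uller distance $\tfrac12 \log\frac{1+s_n}{1-s_n} \to 0$, so both $X_n$ and $Y_{t_n}^{(n)}$ converge to $X$, and the relevant Teichm\"uller map $X_n \to Y_{t_n}^{(n)}$ has initial quadratic differential $\varphi_n^1 \to \varphi^1$.

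First I would apply Lemma~\ref{lemma-lim-1} (or rather its proof, since here we already know the limiting direction is $\varphi^1$ rather than merely some subsequential $\varphi$): the normalized Bers differentials satisfy
$$
\frac{\beta_{X_n}(\ov{Y}_{t_n}^{(n)})}{\dist_\Teich(X_n, Y_{t_n}^{(n)})} \longrightarrow \Psi(\mu_{\varphi^1}), \qquad
\frac{\beta_{\ov{Y}_{t_n}^{(n)}}(X_n)}{\dist_\Teich(X_n, Y_{t_n}^{(n)})} \longrightarrow -\wh{\Psi(\mu_{\varphi^1})}.
$$
Combined with $\dist_\Teich(X_n,Y_{t_n}^{(n)})/s_n \to 1$ and $s_n = t_n\|\varphi_n\|_1$, this gives that $\tfrac{1}{t_n}\beta_{X_n}(\ov{Y}_{t_n}^{(n)}) \to \|\varphi\|_1\,\Psi(\mu_{\varphi^1}) = h(\varphi)$, and likewise $\tfrac{1}{t_n}\beta_{\ov{Y}_{t_n}^{(n)}}(X_n) \to -\|\varphi\|_1\,\wh{\Psi(\mu_{\varphi^1})} = -\wh{h(\varphi)}$, both convergences taking place in $\QD_0(\Sigma)$. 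Here I use that $\mu_{\varphi_n^1} \to \mu_{\varphi^1}$ and that $\Psi$ and $\phi \mapsto \wh{\phi}$ are continuous (the latter is linear).

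Next, apply the horizontal-foliation map $\hor$, which is continuous on $\QD_0$. We get
$$
\Map(\varphi_n, t_n) = \Bigl(\hor\bigl(\tfrac1{t_n}\beta_{X_n}(\ov{Y}_{t_n}^{(n)})\bigr),\ \hor\bigl(\tfrac1{t_n}\beta_{\ov{Y}_{t_n}^{(n)}}(X_n)\bigr)\Bigr) \longrightarrow \bigl(\hor(h(\varphi)),\ \hor(-\wh{h(\varphi)})\bigr).
$$
By Proposition~\ref{prop-vic}, $\hor(-\wh{h(\varphi)}) = \hor(-h(\varphi)) = \hor(h(\varphi))$ since scaling by a negative real does not change the horizontal foliation (horizontal direction is unchanged; only the transverse measure scales by the positive constant), wait — scaling $\phi$ by $-1$ sends the horizontal foliation to the vertical one. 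Let me restate: one should track the sign carefully, using that $\hor(-\phi)=\ver(\phi)$. Thus the limit is $\bigl(\hor(h(\varphi)), \ver(h(\varphi))\bigr) = \gamma(h(\varphi)) = (\gamma\circ h)(\varphi)$, as claimed.

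The main obstacle I anticipate is precisely this bookkeeping of which foliation (horizontal vs.\ vertical) appears in the second coordinate, and correspondingly whether $\Map(\cdot,0)$ equals $\gamma\circ h$ with $\gamma(\phi)=(\hor(\phi),\ver(\phi))$ or the variant with the second coordinate being the vertical foliation of the \emph{negative}; since $\hor(-\wh{\Psi})=\ver(\wh{\Psi})=\ver(\Psi)$, the definition of $\gamma$ in the paper is exactly the one that makes this come out right, and the sign $-\wh{\Psi(\mu_{\varphi^1})}$ coming from Lemma~\ref{lemma-lim-1} is what converts the "horizontal of the $-$ side" into the "vertical of the $+$ side." A secondary technical point is justifying that the convergence $\varphi_n \to \varphi$ in the bundle $\QD_0(\Sigma)$ forces convergence of base surfaces and of $\mu_{\varphi_n^1}$; this is immediate from the definition of the topology on $\QD_0(\Sigma)$ as a vector bundle over $\Teich(\Sigma)$ and the fact that $\|\varphi\|_1 > 0$, so $\varphi \mapsto \varphi^1$ is continuous near $\varphi$.
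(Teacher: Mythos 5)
Your proposal is correct and follows essentially the same route as the paper: apply Lemma \ref{lemma-lim-1} (noting the limiting direction is forced to be $\varphi^1$ rather than merely subsequential), cancel the $\dist_\Teich/(\|\varphi_n\|_1 t_n)\to 1$ factor to identify the limits of the normalised Bers differentials as $h(\varphi)$ and $-\wh{h(\varphi)}$, then pass to horizontal foliations and use Proposition \ref{prop-vic} together with $\hor(-\phi)=\ver(\phi)$ to land on $(\gamma\circ h)(\varphi)$. The inline self-correction about the sign is a bit untidy, but the final bookkeeping is right and matches what the paper leaves implicit in its appeal to Proposition \ref{prop-vic}.
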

\begin{proof} Suppose $X_n,X\in \Teich(\Sigma)$ are such that $\varphi_n\in \QD_0(X_n)$, and $\varphi\in \QD_0(X)$. Then $X_n\to X$ in $\Teich(\Sigma)$. Let $Y_n\in \Teich(\Sigma)$ be such that 
$$
\Map(\varphi_n,t_n)=\frac{1}{t_{n}}\left(  \hor(\beta_{X_{n}}(\ov{Y}_n )),\hor(\beta_{\ov{Y}_{n}}(X_n)) \right) .
$$
Note that 
$$
\lim_{n\to \infty} \frac{\dist_{\Teich}(X_n,Y_n)}{||\varphi_{n}||_{1}t_{n}}=1. 
$$
\vskip .1cm
\noindent
It now follows from Lemma \ref{lemma-lim-1}  that 
$$
\lim_{n\to \infty}\frac{\beta_{X_{n}}(\ov{Y}_n )}{||\varphi_{n}||_{1}t_{n}}=\Psi(\mu_{\varphi^1})=\frac{h(\varphi)}{||\varphi||_1 } ,
$$
and
$$
\lim_{n\to\infty} \frac{\beta_{\ov{Y}_{n}}(X_n)}{t_{n}}= -\wh{\Psi(\mu_{\varphi^1})}=-\frac{\wh{h(\varphi)}}{||\varphi||_1}.
$$
Combining this with Proposition \ref{prop-vic} proves the lemma.

\end{proof}

\vskip .1cm

\section{Proof of Theorem \ref{thm-main-1}}

We combine the fact that $\gamma:\QD(\Sigma_g)\to \FMF^2$ is a homeomorphism   with Theorem \ref{thm-main-2} to prove the first part of Theorem \ref{thm-main-1}.

\subsection{Proof of Theorem \ref{thm-main-1}: Part I}   
We need to prove that there exists a neighbourhood  $\Ne \subset \QF\setminus \F$ of the Fuchsian locus $\F$ such that 
$\lambda(\Ne)\subset \FMF^2$. The proof is by contradiction. 

\vskip .1cm

If there is no neighbourhood  $\Ne \subset \QF\setminus \F$ of the Fuchsian locus $\F$ such that 
$\lambda(\Ne)\subset \FMF^2$, then there exist a sequence $M_n\in \QF\setminus \F$, and $M\in \F$, such that 
$M_n\to M$, and such that $\lambda(M_n)\notin  \MF^2_\dagger$. From now onwards, we assume that such a sequence exists.

\vskip .1cm

Let $t_n=\dist_{\Teich}(\pt^+_\infty M_n,\pt^-_\infty M_n)$. Then by Theorem \ref {thm-main-2} there exists a  quadratic differential $\phi \in \QD_0(\pt^+_\infty M)$,
such that 
\begin{equation}\label{eq-cons}
\lim\limits_{n\to \infty} \frac{\q^+(M_n)}{t_{n}}= \phi,\quad\quad\quad \lim\limits_{n\to \infty} \frac{\q^-(M_n)}{t_{n}}= -\wh{\phi}.
\end{equation}
This implies that 
$$
\lim_{n\to\infty}\hor\left(\frac{\q^+(M_n)}{t_{n}} \right)= \hor(\phi)\quad\quad\quad    \lim_{n\to\infty}  \hor\left( \frac{\q^-(M_n)}{t_{n}}\right)=\ver(\phi).
$$
It is well known (see Lemma 5.3 in \cite{g-m}) that $(\hor(\phi),\ver(\phi))\in \MF^2_\dagger$. Since $\MF^2_\dagger$ is an open subset of $\MF^2$, we conclude that 
$\lambda(M_n)=\big(\hor(\q^+(M_n)),\hor(\q^-(M_n)\big)\in \MF^2_\dagger$ for $n$ large enough. This contradicts our assumption and the proof is complete.
\vskip .1cm

\subsection{Continuous deformations of  identity maps} To prove the second part of Theorem \ref{thm-main-1}, we need the following auxiliary result. Let $B(r)\subset \R^n$ denote the closed  ball in $\R^n$ of radius $r>0$ which is centred at the origin in $\R^n$.  
\vskip .1cm

\begin{lemma}\label{lemma-aux-0} Let $t_0>0$, and  suppose $f:B(1)\times [0,t_0] \to \R^n$ is a continuous map such that $f(\cdot,0)$ is the identity map. Let $x_0\in B\big(\frac{1}{2}\big)$. Then  there exists $0<t_1\le t_0$
such that $x_0 \in f(B(1)\times\{t\})$ for every $0\le t\le t_1$.
\end{lemma}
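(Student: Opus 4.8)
The plan is to use a degree-theoretic argument. Since $f(\cdot,0) = \mathrm{id}$ and $x_0 \in B(1/2)$ lies in the interior of $B(1)$, the local degree of the map $f(\cdot,0) : B(1) \to \R^n$ at the point $x_0$ relative to the boundary sphere $\partial B(1)$ is $1$; in particular $x_0 \notin f(\partial B(1)\times\{0\})$ since $\|f(y,0)\| = \|y\| = 1 > 1/2$ for $y\in\partial B(1)$. First I would use continuity of $f$ and compactness of $\partial B(1)$ to produce $0 < t_1 \le t_0$ such that $x_0 \notin f(\partial B(1)\times\{t\})$ for all $0\le t\le t_1$: indeed, $\|f(y,0)\| = 1$ for $y\in\partial B(1)$, so by uniform continuity on the compact set $\partial B(1)\times[0,t_0]$ there is $t_1$ with $\|f(y,t)\| > 3/4 > 1/2 \ge \|x_0\|$ whenever $y\in\partial B(1)$ and $t\le t_1$, hence $f(y,t)\ne x_0$ there.

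Next, for each fixed $t\in[0,t_1]$ the map $f(\cdot,t) : (B(1),\partial B(1)) \to (\R^n, \R^n\setminus\{x_0\})$ is well-defined and continuous, so its Brouwer degree $\deg(f(\cdot,t), B(1), x_0)$ is defined. The family $\{f(\cdot,t)\}_{t\in[0,t_1]}$ is a homotopy that avoids $x_0$ on the boundary for all $t$, so by homotopy invariance of the degree, $\deg(f(\cdot,t),B(1),x_0) = \deg(f(\cdot,0),B(1),x_0) = \deg(\mathrm{id},B(1),x_0) = 1$ for every $t\in[0,t_1]$. Since the degree is nonzero, the solvability property of the degree forces $x_0 \in f(B(1)\times\{t\})$ for every $0\le t\le t_1$, which is the desired conclusion.

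The only mildly delicate point is the bookkeeping in the first step — choosing $t_1$ uniformly over the boundary sphere so that the homotopy stays away from $x_0$ on $\partial B(1)$; this is a routine compactness argument. Everything after that is a direct invocation of the two standard properties of Brouwer degree (homotopy invariance and nonvanishing implies a solution), so there is no real obstacle. One could equally phrase the argument without naming "degree" by noting that $f(\cdot,t)$ restricted to $\partial B(1)$ is homotopic, within $\R^n\setminus\{x_0\}$, to the inclusion $\partial B(1)\hookrightarrow \R^n\setminus\{x_0\}$, which is not null-homotopic, so $f(\cdot,t)$ cannot miss $x_0$ on all of $B(1)$; but the degree formulation is cleanest.
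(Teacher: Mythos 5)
Your proof is correct, and while it rests on the same underlying topological fact as the paper's argument --- homotopy invariance of degree forces surjectivity onto $x_0$ --- the route you take is cleaner. The paper works harder: it first extends $f$ radially from $B(1)$ to $B(2)$ so as to freeze the boundary values at the identity, then extends again by inversion to obtain a continuous self-map of $\Sp^n$, invokes degree-one $\Rightarrow$ surjectivity for self-maps of the sphere to find \emph{some} preimage $x_t$ of $x_0$, and finally runs a separate continuity argument to push $x_t$ back into $B(1)$ for small $t$. You instead observe directly that $f(\cdot,t)\vert_{\partial B(1)}$ stays uniformly away from $x_0$ for $t$ small (a compactness estimate), so the relative Brouwer degree $\deg(f(\cdot,t),B(1),x_0)$ is defined and, by homotopy invariance, equals $\deg(\mathrm{id},B(1),x_0)=1$ on all of $[0,t_1]$; the solvability property then finishes. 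This localizes the argument to $B(1)$ from the start and eliminates both the radial interpolation, the inversion extension, and the final ``pull the preimage back into $B(1)$'' step. The trade-off is that the paper's version only uses the elementary statement that a degree-one self-map of $\Sp^n$ is onto, while yours uses the slightly heavier (but completely standard) package of local/relative Brouwer degree with its homotopy invariance and solvability axioms. Either is a perfectly rigorous and complete proof; yours is shorter and, in my view, the more natural formulation.
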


\begin{proof} Our initial goal is to extend the map $f:B(1)\times [0,t_0]\to \R^n$ to $\overline{\R^n}\times [0,t_0]\to \overline{\R^n}$. 
We first extend  $f$ to $B(2)$ as follows. Let $r(x,t)=f(x,t)-x$. Note that $r(x,t)\to 0$ uniformly in $t$, and $x\in B(1)$. Let 
$$
f(x,t)=\begin{cases} 
      f(x,t), & |x| \le 1 \\
      x +(2-|x|)r\big(\frac{x}{|x|},t \big)  & 1\le |x|\le 2 .
\end{cases}
$$
Note that the new $f$ is well defined and continuous on $B(2)\times [0,t_0]$, and  $f(\cdot,0)$ is the identity map on $B(2)$. Moreover, $f(x,t)=x$ for every $t$ assuming $|x|=2$. 
\vskip .1cm

Next, we extend the definition of $f$ to the sphere $\Sp^{n}=\ov{\R^{n}}$ by  inversion. Set 
$$
f(x,t)=\begin{cases} 
      f(x,t), & |x| \le 2 \\
(f(x^*,t))^* & 2\le |x| .
\end{cases}
$$
Here $x\to x^*$ is the inversion map of the sphere $\Sp^{n}$ which maps $B(2)$ onto its complement, and which is equal to the identity on the boundary of $B(2)$. We have now constructed a continuous  map 
$f:\Sp^{n}\times [0,t_0]\to \Sp^{n}$ such that $f(x,0)=x$ for every $x \in \Sp^{n}$. By continuity, for every $t\in [0,t_0]$ the map $f(\cdot,t):\Sp^{n}\to \Sp^{n}$ is of degree one. In particular, each $f(\cdot,t):\Sp^{n}\to \Sp^{n}$  is surjective. 

\vskip .1cm

Consider the point $x_0\in B\big(\frac{1}{2}\big)\subset \overline{\R^n}=\Sp^n$. Since $f(\cdot,t):\Sp^{n}\to \Sp^{n}$  is surjective we conclude that for every $t\in [0,t_0]$ there exists $x_t\in \Sp^n$ such that $f(x_t,t)=x_0$.  On the other hand, $f(x,0)=x$ for every $x\in \Sp^{n}$. Thus, there exists $0<t_1\le t_0$ so that if $f(x,t)=x_0$ for some $0\le t\le t_1$, then $|x|<1$. Therefore, we conclude that $x_t\in B(1)$ when $0\le t\le t_1$.
This completes the proof.
\end{proof}

\subsection{Proof of Theorem \ref{thm-main-1}: Part II}

Set $\Map_0(\cdot)=\Map(\cdot,0)$. Thus, $\Map_0:\QD_0(\Sigma)\to \FMF^2$ is a homeomorphism. 
Let $\alpha=(\alpha_1,\alpha_2)\in \FMF^2$, and let $\psi=\Map^{-1}_0(\alpha)$. Choose embedded closed balls $B_\psi \subset \QD_0(\Sigma)$, and $B_\alpha\subset \FMF^2$, containing $\psi$ and $\alpha$ respectively in their interiors, and such that  $\Map_0(B_\psi)=B_\alpha$. 

\vskip .1cm

Since $B_\psi$ is a compact subset of $\QD_0(\Sigma)$, there exists $t_0$ such that $B_\psi \times [0,t_0]\subset \LL$.
Let $B'_\alpha$ be a strictly larger  open ball which is embedded in $\FMF^2$, and which contains the closed ball $B_\alpha$. Then there exists $0<t_1\le t_0$ so that $\Map_0\big(B_\psi\times\{t\}\big)\subset B'_\alpha$, for every $0\le t\le t_1$.
\vskip .1cm
After finding suitable embeddings  $e_1:B_\psi\to \R^n$, and $e_2:B'_\alpha\to \R^n$, we can assume that $e_1(B_\psi)=e_2(B_\alpha)=B$, and  $e_1(\psi)=e_2(\alpha)=0$. Moreover, we may assume that  $e_2\circ \Map_0 \circ e^{-1}_1:B\to B$ is the identity map. 
\vskip .1cm

Now, for $0\le t\le t_1$, the map $f:B\times [0,t_1]\to \R^n$, given by $f=e_2\circ \Map \circ e^{-1}_1$, is well defined and it satisfies the assumptions of Lemma \ref{lemma-aux-0}. Thus, from Lemma \ref{lemma-aux-0} we conclude that there exists $0<t_2\le t_1$ such that $\alpha\in \Map\big(\QD_0(\Sigma)\times \{t\}\big)$ for every $0<t<t_2$. Combining this with the property (3) from  Theorem \ref{thm-main-3} yields the proof of the theorem.


\begin{thebibliography}{99}  

\bibitem{ahlfors} L. Ahlfors, \textsl{Lectures on Quasiconformal Mappings.} 
American Mathematical Society, Providence, RI, (2006)

\bibitem{bers} L. Bers, \textsl{A non-standard integral equation with applications to quasiconformal mappings.} 
Acta Math. 116 (1966), 113-134.

\bibitem{b-o} F. Bonahon, J-P. Otal, \textsl{Laminations mesur\'ees de plissage des vari\'et\'es hyperboliques de dimension
3.}  Ann. Math., 160, 1013-1055, (2004)

\bibitem{bonahon} F. Bonahon,  \textsl{Kleinian groups which are almost {F}uchsian.} Journal f{\"u}r die Reine und Angewandte Mathematik,  vol  587,  1-15, (2005)
		
\bibitem{choudhury} 	D. Choudhury, \textsl{Measured foliations at infinity of quasi-{F}uchsian manifolds near the {F}uchsian locus.}
{https://arxiv: 2111.01614} (2021)		

\bibitem{d-s} B. Dular, J-M. Schlenker, \textsl{Convex co-compact hyperbolic manifolds are determined by their pleating lamination.}
arXiv:2403.10090, (2024)

\bibitem{g-m} F.  Gardiner, H. Masur, \textsl{Extremal length geometry of Teichm\"uller space.} 
Complex Variables, Theory Appl., vol 16, 209-237, (1991)

\bibitem{ker} S. Kerckhoff, \textsl{Lines of minima in teichm\"uller space.} Duke Mathematical Journal,
65(2) (1992)
		
\bibitem{k-s} K. Krasnov, J-M. Schlenker,  \textsl{A symplectic map between hyperbolic and complex {Teichm{\"u}ller} theory.}
Duke Mathematical Journal, vol 150, No 2, 331-356, (2009)
		
\bibitem{schlenker}	J-M.  Schlenker, \textsl{Notes on the Schwarzian tensor and measured foliations at infinity of quasi-{F}uchsian manifolds.} arXiv:1708.01852, (2017)

\bibitem{schlenker-1}J-M.  Schlenker, \textsl{Volumes of quasifuchsian manifolds.}  Surveys in Differential Geometry, 25:1(2020), 319-353

\bibitem{wentworth} R.  Wentworth, \textsl{Energy of Harmonic Maps and Gardiner's Formula.} Contemporary Mathematics,
vol 432, (2007) 

\end{thebibliography}
\end{document}